\numberwithin{equation}{section}
\newtheorem{theorem}{Theorem}[section]
\newtheorem{lemma}[theorem]{Lemma}
\newtheorem{proposition}[theorem]{Proposition}
\newtheorem{corol}[theorem]{Corollary}
\theoremstyle{definition}
\newtheorem{definition}[theorem]{Definition}
\newtheorem{remark}[theorem]{Remark}
\renewcommand{\div}{\mathop{\rm div}\nolimits}
\renewcommand{\det}{\mathop{\rm det}\nolimits}
\newcommand{\tr}{\mathop{\rm tr}\nolimits}
\newcommand{\Sym}{\mathcal{S}}
\newcommand{\R}{\mathbb{R}}
\newcommand{\N}{\mathbb{N}}
\newcommand{\dt}{\partial_t}
\newcommand{\dx}{\partial_{x_2}}
\providecommand{\abs}[1]{\left\lvert#1\right\rvert}
\newcommand{\id}{{\rm Id}}
\newcommand\eps{{\varepsilon}}
\author{Elisabetta Chiodaroli, Camillo De Lellis and Ond\v{r}ej Kreml}
\title[Global ill-posedness for compressible Euler]{Global ill-posedness of the
isentropic system of gas dynamics}
\begin{document}
 
\begin{abstract}
We consider the isentropic compressible Euler system in $2$ space dimensions
with pressure law $p (\rho) = \rho^2$ and we show the existence of classical
Riemann data, i.e. pure jump discontinuities across a line, for which there
are infinitely many admissible bounded weak solutions (bounded away from the void).
We also show that some of these Riemann data are generated by a $1$-dimensional
compression wave: our theorem leads therefore to Lipschitz initial data for which
there are infinitely many global bounded admissible weak solutions.
\end{abstract}

\maketitle

\section{Introduction}

Consider the isentropic compressible Euler
equations of gas dynamics in two space dimensions.
This system consists of $3$ scalar equations, which state the conservation of mass and linear momentum.
The unknowns are the density $\rho$ and the velocity $v$. The resulting Cauchy problem
takes the form:
\begin{equation}\label{eq:Euler system}
\left\{\begin{array}{l}
\partial_t \rho + {\rm div}_x (\rho v) \;=\; 0\\
\partial_t (\rho v) + {\rm div}_x \left(\rho v\otimes v \right) + \nabla_x [ p(\rho)]\;=\; 0\\
\rho (\cdot,0)\;=\; \rho^0\\
v (\cdot, 0)\;=\; v^0 \, .
\end{array}\right.
\end{equation}
The pressure $p$ is a function of $\rho$ determined from the constitutive thermodynamic
relations of the gas under consideration and it is assumed to satisfy $p'>0$ (this
hypothesis guarantees also the hyperbolicity of the system on the regions where $\rho$ is
positive).
A common choice is the polytropic pressure law
$p(\rho)= \kappa \rho^\gamma$
with constants $\kappa>0$ and $\gamma>1$. The classical kinetic theory of gases predicts 
exponents $\gamma = 1 + \frac{2}{d}$, where $d$ is the degree of freedom of the molecule
of the gas. Here we will be concerned mostly with the particular choice 
$p(\rho)= \rho^2$. However several of our technical statements hold under the general assumption
$p'>0$ and the specific choice $p (\rho) = \rho^2$ is relevant only to some portions of
our proofs.

It is well-known that, even starting from extremely regular initial data, the system 
\eqref{eq:Euler system}
develops singularities in finite time. In the mathematical literature a lot of effort has
been devoted to understanding how solutions can be continued after the appearance of the first
singularity, leading to a
quite mature theory in one space dimension (we refer the reader to the monographs
\cite{br},\cite{da} and \cite{se}). In this paper we show that, in more than one
space dimension, 
the most popular concept of an admissible solution fails to yield uniqueness even under
very strong assumptions on the initial data. In particular we consider bounded
weak solutions of \eqref{eq:Euler system}, satisfying \eqref{eq:Euler system} in the usual
distributional sense (we refer to Definition \ref{d:weak} for the
precise formulation), and we call them admissible if they satisfy the following additional
inequality in the sense of distibutions (usually called {\em entropy inequality}, although
for the specific system \eqref{eq:Euler system} it is rather a weak form of energy balance):
\begin{equation} \label{eq:energy inequality}
\dt \left(\rho \varepsilon(\rho)+\rho
\frac{\abs{v}^2}{2}\right)+\div_x
\left[\left(\rho\varepsilon(\rho)+\rho
\frac{\abs{v}^2}{2}+p(\rho)\right) v \right]
\;\leq\; 0
\end{equation}
where the internal energy $\varepsilon: \R^+\rightarrow \R$ is given
through the law $p(r)=r^2 \varepsilon'(r)$. Indeed, admissible solutions are required
to satisfy a slightly stronger condition, i.e. a form of \eqref{eq:energy inequality}
which involves also the initial data, see Definition \ref{d:admissible}.
For all solutions considered in this paper, $\rho$ will always be
bounded away from $0$, i.e. $\rho\geq c_0$ for some positive constant $c_0$.

We denote the space variable as $x=(x_1, x_2)\in \R^2$ and consider the special
initial data
\begin{equation}\label{eq:R_data}
(\rho^0 (x), v^0 (x)) := \left\{
\begin{array}{ll}
(\rho_-, v_-) \quad & \mbox{if $x_2<0$}\\ \\
(\rho_+, v_+) & \mbox{if $x_2>0$,} 
\end{array}\right. 
\end{equation}
where $\rho_\pm, v_\pm$ are constants. 
It is well-known that for some special choices of these constants there are
solutions of \eqref{eq:Euler system} which are {\em rarefaction waves}, i.e.
self-similar solutions depending only on $t$ and $x_2$ which are locally Lipschitz
for positive $t$ and constant on lines emanating from the origin (see \cite[Section 7.6]{da}
for the precise definition). Reversing their order (i.e. exchanging $+$ and $-$) the
very same constants allow for a {\em compression wave} solution, i.e. a solution on
$\R^2\times ]-\infty, 0[$ which is locally Lipschitz and converges, for $t\uparrow 0$,
to the jump discontinuity of \eqref{eq:R_data}. When this is the case we will then say that
the data \eqref{eq:R_data} are {\em generated by a classical compression wave}.

We are now ready to state the main theorem of this paper

\begin{theorem}\label{t:main}
Assume $p (\rho) = \rho^2$. Then there are data as in \eqref{eq:R_data} for
which there are infinitely many bounded admissible solutions $(\rho, v)$ of
\eqref{eq:Euler system} on $\R^2\times [0, \infty[$ 
with $\inf \rho >0$. Moreover, these data are generated by classical compression waves.  
\end{theorem}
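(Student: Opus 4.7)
I would follow the convex-integration strategy of De Lellis and Székelyhidi, as adapted to \eqref{eq:Euler system} in the authors' earlier work on Riemann problems for compressible Euler. Introduce a symmetric trace-free $2\times 2$ tensor field $U$ and a scalar $c>0$, and consider the linear system
\[
\partial_t\rho+\div_x(\rho v)=0,\qquad
\partial_t(\rho v)+\div_x(\rho U)+\nabla_x\!\Bigl(p(\rho)+\tfrac{c}{2}\rho\Bigr)=0,
\]
together with the pointwise constraints $v\otimes v-U=\tfrac{c}{2}\id$ and $|v|^2=c$. Any $(\rho,v,U,c)$ satisfying these relations gives a weak solution of \eqref{eq:Euler system}. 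A \emph{fan subsolution} is then a piecewise constant, $x_1$-independent, $(t,x_2)$-self-similar field solving the linear system, matching the Riemann data $(\rho_\pm,v_\pm)$ on two outer regions $\{x_2<\sigma_- t\}$, $\{x_2>\sigma_+ t\}$ with $U_\pm = v_\pm\otimes v_\pm-\tfrac{|v_\pm|^2}{2}\id$, and satisfying the strict matrix inequality $v\otimes v-U<\tfrac{c}{2}\id$ on the middle wedge. The abstract $h$-principle from the earlier work then produces infinitely many genuine weak solutions of \eqref{eq:Euler system} agreeing with the subsolution outside the wedge; since $|v|^2=c$ throughout the wedge, these can be arranged to satisfy \eqref{eq:energy inequality}.

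\textbf{First step.} Produce an open set $\mathcal{R}$ of Riemann data carrying such a subsolution. The free unknowns are the middle state $(\rho_m,v_m,U_m)$, the constant $c$, and the two interface speeds $\sigma_\pm$; Rankine--Hugoniot across each interface, the subsolution inequality on the middle wedge, and strict dissipation of energy across the two jumps yield an algebraic system of equalities and open inequalities in the parameters $(\rho_\pm,v_\pm,\rho_m,v_m,U_m,c,\sigma_\pm)$. One seeks an open set of solutions; the particular pressure $p(\rho)=\rho^2$ is used here to keep the inequalities explicit enough to verify that the feasible region is nonempty and open. The outcome is a nonempty open set $\mathcal{R}$ of ``shock-like'' Riemann data for which \eqref{eq:Euler system} admits infinitely many bounded admissible solutions with $\inf\rho>0$.

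\textbf{Second step, and main obstacle.} Intersect $\mathcal{R}$ with the set $\mathcal{C}$ of Riemann data that are generated by a classical compression wave. By the definition in the introduction, $(\rho_-,v_-,\rho_+,v_+)\in\mathcal{C}$ iff the Riemann problem obtained by swapping the $\pm$ labels is classically solved by a single rarefaction wave. I would parametrise the $1$-shock and $2$-rarefaction curves of the one-dimensional $p$-system with $p(\rho)=\rho^2$ via the explicit Riemann invariants, write the swapped states along the rarefaction loci, and verify by a direct comparison that this locus crosses $\mathcal{R}$ on an open subset. This compatibility is the delicate point: the convex-integration subsolution inherently favours shock-type data on the forward Riemann problem, while the compression-wave constraint asks the \emph{swapped} problem to be resolved by a rarefaction; reconciling the two requires the concrete form $p(\rho)=\rho^2$, which is where that assumption of the theorem is finally used. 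Picking any $(\rho_\pm,v_\pm)\in\mathcal{R}\cap\mathcal{C}$, one concatenates the unique Lipschitz compression wave on $\R^2\times\,]-T,0[$ with any of the infinitely many admissible forward solutions provided by Step~1 to conclude the proof.
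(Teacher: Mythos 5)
Your proposal matches the paper's strategy closely: reduce to an admissible ``fan subsolution'' via the De Lellis--Sz\'ekelyhidi $h$-principle, reduce that to an algebraic system of Rankine--Hugoniot equalities and strict subsolution/admissibility inequalities on a three-region fan, and then verify compatibility with the compression-wave locus by computing explicitly with $p(\rho)=\rho^2$. The one place you over-promise is the claim that $\mathcal R$ is open and that $\mathcal R\cap\mathcal C$ is an open subset of the compression-wave locus --- the paper proves neither, instead substituting the compression-wave ansatz ($v_+=(-1/\rho_+,0)$, $v_-=(-1/\rho_+,2\sqrt2(\sqrt{\rho_+}-\sqrt{\rho_-}))$) into the algebraic system from the start and exhibiting a single explicit solution (e.g.\ $\rho_-=1$, $\rho_+=4$, $\nu_+=\beta=\delta=0$, $\alpha=-\tfrac14$, $\rho_1=\tfrac{15}{7}$, with a one-parameter interval of admissible $C_1$); that is all the theorem requires, and the extra openness you invoke would need its own argument.
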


It follows from the usual treatment of the $1$-dimensional Riemann problem that for
the data of Theorem \ref{t:main} uniqueness holds if
the admissible solutions are also required to be self-similar, i.e.
of the form $(\rho, v) (x,t) = \left(r \left(\frac{x_2}{t}\right), 
w \left(\frac{x_2}{t}\right)\right)$ and to have locally bounded variation (see Proposition
\ref{p:Riemann_classico}). Note that such solutions must be discontinuous, because the
data of Theorem \ref{t:main} are generated by compression waves. We in fact conjecture that this is
the case for {\em any} initial data \eqref{eq:R_data} allowing the nonuniqueness property
of Theorem \ref{t:main}: 
however this fact does not seem to follow from the usual
weak-strong uniqueness (as for instance in \cite[Theorem 5.3.1]{da}) because the Lipschitz
constant of the classical solution blows up as $t\downarrow 0$. Related results
in one space dimension are contained in the work of DiPerna \cite{DP} and in the works of
Chen and Frid \cite{CF1}, \cite{CF2}.

As an obvious corollary of Theorem \ref{t:main} we arrive at the following statement. 

\begin{corol}\label{c:compression}
There are Lipschitz initial data $(\rho^0, v^0)$ for which there are infinitely
many bounded admissible solutions $(\rho, v)$ of \eqref{eq:Euler system} on
$\R^2\times [0, \infty[$ with
$\inf \rho >0$. These
solutions are all locally Lipschitz on a finite interval on which they
all coincide with the unique classical solution.   
\end{corol}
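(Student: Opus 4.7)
My plan is to use the compression-wave structure produced by Theorem~\ref{t:main} to reverse-engineer a Lipschitz Cauchy problem whose admissible solutions become non-unique only after a positive waiting time.

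First I would invoke Theorem~\ref{t:main} to obtain Riemann data $(\rho^R, v^R)$ as in \eqref{eq:R_data} generated by a classical compression wave, together with a family $\{(\rho_i, v_i)\}_{i\in I}$ of infinitely many bounded admissible weak solutions on $\R^2 \times [0,\infty[$ with these data and with $\inf\rho_i > 0$. By the definition recalled in the introduction, there exists a locally Lipschitz classical solution $(\rho_{cw}, v_{cw})$ on $\R^2 \times ]-T, 0[$ (for some $T>0$) whose $t \uparrow 0$ trace equals $(\rho^R, v^R)$. After translating time by $+T$, I would view $(\rho_{cw}, v_{cw})$ as a locally Lipschitz classical solution on $\R^2 \times [0, T[$ with Lipschitz initial datum
\[
(\rho^0, v^0) := (\rho_{cw}, v_{cw})(\cdot, 0),
\]
which will serve as the common initial value for infinitely many admissible continuations.

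Next, for each $i \in I$, I would form the concatenation
\[
(\tilde{\rho}_i, \tilde{v}_i)(x, t) := \begin{cases} (\rho_{cw}, v_{cw})(x, t), & 0 \leq t < T,\\ (\rho_i, v_i)(x, t-T), & t \geq T, \end{cases}
\]
and verify that it is a bounded admissible weak solution of \eqref{eq:Euler system} on $\R^2 \times [0,\infty[$ with initial data $(\rho^0, v^0)$. The crucial ingredient is a standard gluing lemma: two bounded weak solutions patched across a common time slice yield a global weak solution provided their $L^\infty$ traces agree at that slice. Here the compression wave has trace $(\rho^R, v^R)$ at $t = T^-$ by construction, matching $(\rho_i, v_i)(\cdot, 0) = (\rho^R, v^R)$, so the traces agree. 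The energy inequality \eqref{eq:energy inequality} passes across the junction since on $[0, T[$ the solution is classically Lipschitz (so \eqref{eq:energy inequality} holds with equality) and on $[T, \infty[$ it is inherited from the admissibility of $(\rho_i, v_i)$; the strengthened form involving the initial datum likewise holds because $(\tilde{\rho}_i, \tilde{v}_i)$ starts from $(\rho^0, v^0)$ via a classical energy-conserving flow.

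Since the $(\rho_i, v_i)$ are pairwise distinct, so are the $(\tilde{\rho}_i, \tilde{v}_i)$; they share the same Lipschitz initial datum $(\rho^0, v^0)$ and by construction coincide with the unique classical compression wave on the finite interval $[0, T[$, on which they are locally Lipschitz. The only nontrivial verification is the concatenation step just described, and this reduces to a routine integration-by-parts argument using the matching traces at $t = T$; I therefore expect no significant obstacle, and the corollary follows as a direct consequence of Theorem~\ref{t:main}.
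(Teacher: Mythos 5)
Your proposal is correct and is essentially the paper's own argument: glue the classical compression wave (Lemma~\ref{l:comp_wave}) at its blow-up time onto each of the infinitely many admissible continuations produced by Theorem~\ref{t:main}, then shift the time origin so that a Lipschitz time slice of the compression wave becomes the initial datum. The only cosmetic difference is that the paper fixes the shift to be $1$, whereas you keep a free parameter $T$; the gluing/trace-matching and energy-equality observations you make are exactly the ``straightforward to check'' steps the paper elides.
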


We note in passing that, although the last statement of the corollary can be directly proved
following the details of our construction, it is also a consequence of the admissibility
condition, the Lipschitz regularity of the compression wave (before the singular time
is reached) and the well-known weak-strong uniqueness of \cite[Theorem 5.3.1]{da}.

\subsection{$h$-principle and the Euler equations} The proof of Theorem \ref{t:main} 
relies heavily on the works of the second author and L\'aszl\'o Sz\'ekelyhidi, who 
in the paper \cite{dls1} introduced methods from the theory of differential
inclusions to explain the existence of compactly supported nontrivial weak solutions of the
{\em incompressible} Euler equations (discovered in the pioneering work
of Scheffer \cite{sc}; see also \cite{sh}). It was already observed by the same pair of
authors that these methods could be applied to the compressible Euler equations and lead
to the ill-posedness of bounded admissible solutions, see \cite{dls2}. However, the data of
\cite{dls2} were extremely irregular and raised the question whether the ill-posedness was
due to the irregularity of the data, rather than to the irregularity of the solution. 

A preliminary answer
was provided in the work \cite{ch} where the first author showed that data with very regular densities
but irregular velocities still allow for nonuniqueness of admissible solutions. The
present paper gives a complete answer, since we show that even for some smooth initial data 
nonuniqueness of bounded admissible solutions arises after the first blow-up time. 
It remains however an open question how irregular
such solutions have to be in order to display the pathological behaviour of Theorem \ref{t:main}. 
One could speculate that, in analogy to what has been shown recently for the incompressible
Euler equations, even a ``piecewise H\"older regularity'' might not be enough; see
\cite{dls4}, \cite{dls5}, \cite{isett}, \cite{bdls} and in particular \cite{dan}.

This paper draws also heavily from the work \cite{sz} where Sz\'ekelyhidi 
coupled the methods introduced in \cite{dls1}-\cite{dls2} with a clever construction
to produce rather surprising irregular solutions of the incompressible Euler equations 
with vortex-sheet initial data. This work of Sz\'ekelyhidi was in turn motivated by the so-called Muskat problem
(see \cite{cfg}, \cite{sz2} and \cite{shvidkoy}; we moreover refer to \cite{dls3} for a rather detailed
survey). Indeed the basic idea of looking for piecewise constant subsolutions as defined in Section \ref{s:subsolution}
stems out of several conversations with Sz\'ekelyhidi and have been inspired by a remark of Shnirelman upon
the proof of \cite{sz}.

\subsection{Acknowledgements} The research of Camillo De Lellis has been supported by the SNF Grant 129812,
whereas Ond\v{r}ej Kreml's research has been financed by the SCIEX Project 11.152. The authors are also
very thankful to L\'aszl\'o Sz\'ekelyhidi for several enlightning conversations.

\section{Ideas of the proof and plan of the paper}

\subsection{Subsolutions}\label{ss:subsolutions} 
Especially relevant for us is the appropriate notion of {\em subsolution},
which allows to use the methods of \cite{dls1}-\cite{dls2} to solve the equations {\em and}
impose a certain specific initial data. We give here a brief description of the concept of
subsolution relevant to us and refer to \cite{dls3} for the motivation behind it and its
links to existing literature in physics and mathematics.

Consider first some data as in \eqref{eq:R_data}. We then partition the upper half space
$\{t>0\}$ in regions contained between half-planes meeting all at the line $\{t=x_2=0\}$, see
Definition \ref{d:fan} and cf. Figure \ref{fig:fan}. We then define the density function 
$\rho = \overline{\rho}$ to be constant in each region: this density function will indeed
give the final $\rho$ for all the solutions we construct and it is therefore required to
take the constant values $\rho_\pm$ in the outermost regions $P_\pm$.

\begin{figure}[htbp]
\begin{center}
\input 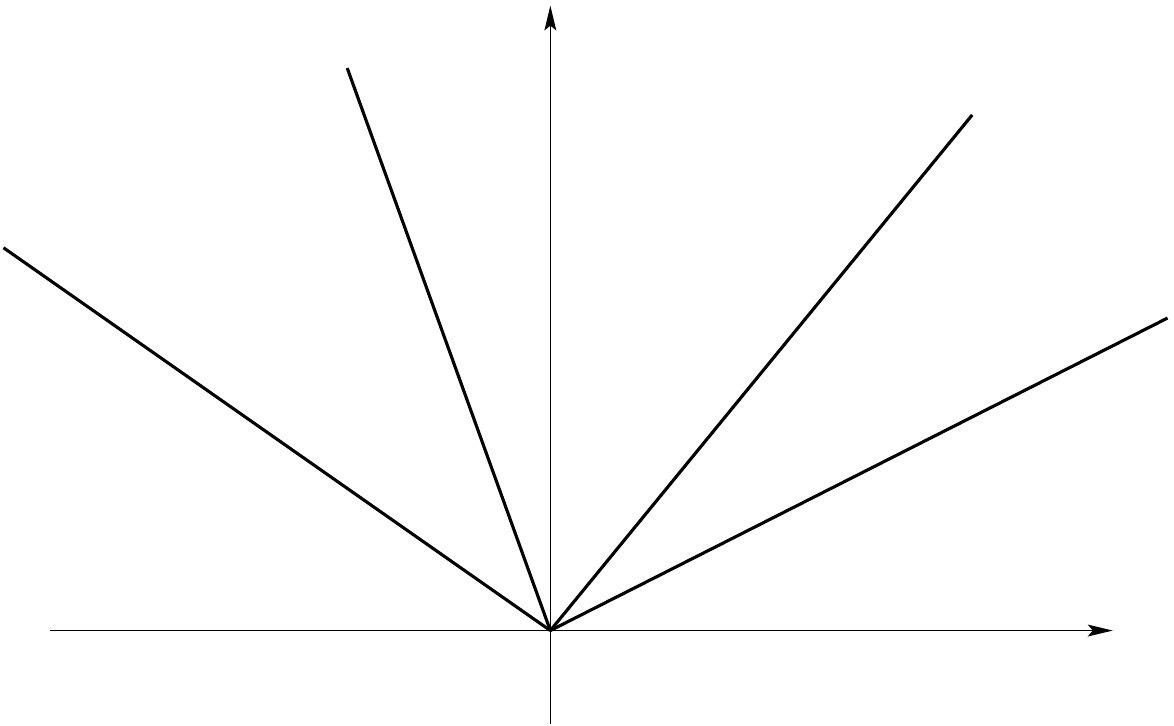_t
\caption{A ``fan partition'' in five regions.}
\label{fig:fan}
\end{center}
\end{figure}

We then solve the compressible Euler equations \eqref{eq:Euler system} in each region
$P_1, \ldots, P_N$ using the methods of \cite{dls2}. Indeed observe that in each such region 
the density is constant and thus it suffices to
to construct solutions of the {\em incompressible} Euler equations
with {\em constant pressure}. Employing the methods of \cite{dls2} we can also impose
that the modulus of the velocity is constant (in each region): its square 
will be denoted by $C_i$. 
In \cite{dls2} such solutions are constructed adding oscillations to an appropriate
{\em subsolution}, which consists of a pair $\overline{v}, \overline{u}$ of smooth functions,
the first taking values in $\R^2$ and the second taking values in the space of symmetric, trace-free
$2\times 2$ matrices. These functions satisfy the linear system of PDEs 
\[
\left\{\begin{array}{l}
\partial_t \overline{v} + {\rm div}_x \overline{u} = 0\\ \\
{\rm div}_x \overline{v} =0\, .
\end{array}\right.
\]
and a suitable relaxation of the nonlinear constraints $u = v\otimes v - \frac{|v|^2}{2} {\rm Id}$. 

In our particular case we will choose our subsolutions to be {\em constant} on each 
region $P_i$: the corresponding values will be denoted by $(\rho_i, v_i, u_i)$ and the
corresponding globally defined (piecewise constant) functions $(\overline{\rho}, \overline{v}, \overline{u})$
will be called {\em fan subsolutions}
of the compressible Euler equations. 
We then wish to choose our subsolution so that, after solving
\eqref{eq:Euler system} in each region $P_i$ with the methods of \cite{dls2}, the resulting globally 
defined $(\rho, v)$ are admissible {\em global} solutions of \eqref{eq:Euler system}. This leads to a suitable
system of PDEs for the piecewise constant functions $(\overline{\rho}, \overline{v}, \overline{u})$
which are
summarized in the Definitions \ref{d:subs} and \ref{d:admiss}. In Section \ref{s:subsolution}
we then briefly recall the notions of the papers \cite{dls1}-\cite{dls2} and in Section
\ref{s:CI} we describe how to
suitably modify the arguments there to reduce the proof of Theorem \ref{t:main} to the
existence of the ``fan subsolutions'' of Definitions \ref{d:subs} and \ref{d:admiss}:
the precise statement of this reduction is given in Proposition \ref{p:subs}.

\subsection{The algebraic system} In Section
\ref{s:algebra}, by making some specific choices, the existence of such subsolution is 
reduced to finding an array of real numbers satisfying some algebraic identities and inequalities,
see Proposition \ref{p:algebra}. Indeed, since the functions $(\overline{\rho}, \overline{v}, \overline{u})$
assume constant values in each region of the fan decomposition, these conditions are nothing but suitable
``Rankine-Hugoniot type'' identitites and inequalities. Although at this stage all computations can be
carried in general, we restrict our attention to a fan decomposition which consists of only three regions.
Therefore, the resulting solutions provided by Proposition \ref{p:subs} (and therefore
also those of Theorem \ref{t:main}) will take the constant values
$(\rho_\pm, v_\pm)$ outside a ``wedge'' of the form $P_1 = \{\nu_- t < x_2< \nu_+ t\}$: inside this wedge
the solutions will instead behave in a very chaotic way. 

Thus far, all the statements can be carried out for a general
pressure law $p$. In the case $p (\rho) = \rho^2$ we also compute explicitely the well-known conditions
that must be imposed on the velocities $v_\pm$ and $\rho_\pm$ so that the corresponding data
\eqref{eq:R_data} are generated by a compression wave: this gives then an additional constraint.
Observe that for such data the ``classical solution'' will be a simple shock wave  traveling at a certain
speed, whereas the nonstandard solutions of Theorem \ref{t:main} ``open up'' the singularity and fill
the corresponding region $P_1$ with many oscillations.

Coming back to the algebraic constrains of Proposition \ref{p:algebra}, although 
there seems to be a certain abundance of solutions to this set of
identities and inequalities, currently we do not have an efficient and general 
method for finding them. We propose two
possible ways in the Sections \ref{s:rho^2_1} and \ref{s:p_strana}. That
of Section \ref{s:rho^2_1} is the most effective and produces the initial data of Theorem \ref{t:main}
which are generated by a compression wave. That of Section \ref{s:p_strana} is an alternative
strategy, where, instead of making a precise choice
of the pressure law $p$, we exploit it as an extra degree of freedom: 
as a result this method gives data as in Theorem \ref{t:main} but with a different pressure law,
which is essentially a suitable smoothing of the step-function. We also do not know whether any of
these data are generated by compression waves. 

\subsection{Classical Riemann problem} Finally in Section
\ref{s:riemann} we show that the self-similar solutions to \eqref{eq:Euler system}-\eqref{eq:R_data}
are unique: this follows from classical considerations
but since we have not been able to find a precise reference, we include the argument for completeness.

\section{Subsolutions}\label{s:subsolution}

\subsection{Weak and admissible solutions of \eqref{eq:Euler system}}
We recall here the usual definitions of weak and admissible
solutions to \eqref{eq:Euler system}.

\begin{definition}\label{d:weak}
By a \textit{weak solution} of \eqref{eq:Euler system} on $\R^2\times[0,\infty[$ we
mean a pair $(\rho, v)\in L^\infty(\R^2\times [0,\infty[)$ such that the following identities 
hold for every test functions $\psi\in C_c^{\infty}(\R^2\times [0, \infty[)$,
$\phi\in C_c^{\infty}(\R^2\times [0, \infty[)$:
\begin{equation} \label{eq:weak1}
\int_0^\infty \int_{\R^2} \left[\rho\dt \psi+ \rho v \cdot \nabla_x \psi\right] dx dt+\int_{\R^2} \rho^0(x)\psi(x,0) dx \;=\; 0
\end{equation}
\begin{align} \label{eq:weak2}
&\int_0^\infty \int_{\R^2} \left[ \rho v \cdot \dt \phi+ \rho v \otimes v : D_x \phi +p(\rho) \div_x \phi \right]
+\int_{\R^2} \rho^0(x) v^0(x)\cdot\phi(x,0) dx\;=\; 0.
\end{align}
\end{definition}

\begin{definition}\label{d:admissible}
A bounded weak solution $(\rho, v)$ of \eqref{eq:Euler system} is admissible if
it satisfies the following inequality for every nonnegative 
test function $\varphi\in C_c^{\infty}(\R^2\times [0,\infty[)$:
\begin{align} \label{eq:admissibility condition}
 &\int_0^\infty\int_{\R^2} \left[\left(\rho\varepsilon(\rho)+\rho \frac{\abs{v}^2}{2}\right)\dt \varphi+\left(\rho\varepsilon(\rho)+\rho
\frac{\abs{v}^2}{2}+p(\rho)\right) v \cdot \nabla_x \varphi \right]\notag \\
&+\int_{\R^2} \left(\rho^0 (x) \varepsilon(\rho^0 (x))+\rho^0 (x)\frac{\abs{v^0 (x)}^2}{2}\right) 
\varphi(x,0)\, dx \;\geq\; 0\, .
\end{align}
\end{definition}

\subsection{Subsolutions} To begin with, we state more precisely the definition of subsolution in our context. 
Here $\Sym_0^{2\times2}$ denotes the set of symmetric traceless $2\times2$ matrices and 
$\id$ is the identity matrix. We first introduce a notion of good partition for the upper half-space
$\R^2\times ]0, \infty[$.

\begin{definition}[Fan partition]\label{d:fan}
A {\em fan partition} of $\R^2\times ]0, \infty[$ consists of finitely many open sets $P_-, P_1, \ldots ,P_N, P_+$
of the following form 
\begin{align}
 P_- &= \{(x,t): t>0 \quad \mbox{and} \quad x_2 < \nu_- t\}\\
 P_+ &= \{(x,t): t>0 \quad \mbox{and} \quad x_2 > \nu_+ t\}\\
 P_i &= \{(x,t): t>0 \quad \mbox{and} \quad \nu_{i-1} t < x_2 < \nu_i t\}
\end{align}
where $\nu_- = \nu_0 < \nu_1 < \ldots < \nu_N = \nu_+$ is an arbitrary collection of real numbers.
\end{definition}

The next two definitions are then motivated by the discussion of Section \ref{ss:subsolutions}. However
at the present stage it is not completely clear why the relevant partial differential
equations (and inequalities!) for the piecewise constant solutions are given by \eqref{eq:continuity},
\eqref{eq:momentum} and \eqref{eq:admissible subsolution}: their role will become
transparent in the next subsection when we prove Proposition \ref{p:subs}. 

\begin{definition}[\textit{Fan Compressible} subsolutions] \label{d:subs}
A \textit{fan subsolution} to the compressible Euler equations \eqref{eq:Euler system} with
initial data \eqref{eq:R_data} is a triple 
$(\overline{\rho}, \overline{v}, \overline{u}): \R^2\times 
]0,\infty[ \rightarrow (\R^+, \R^2, \Sym_0^{2\times2})$ of piecewise constant functions satisfying
the following requirements.
\begin{itemize}
\item[(i)] There is a fan partition $P_-, P_1, \ldots, P_N, P_+$ of $\R^2\times ]0, \infty[$ such that
\[
(\overline{\rho}, \overline{v}, \overline{u})= \sum_{i=1}^N 
(\rho_i, v_i, u_i) \bm{1}_{P_i}
+ (\rho_-, v_-, u_-) \bm{1}_{P_-}
+ (\rho_+, v_+, u_+) \bm{1}_{P_+}
\]
where $\rho_i, v_i, u_i$ are constants with $\rho_i >0$ and $u_\pm =
v_\pm\otimes v_\pm - \textstyle{\frac{1}{2}} |v_\pm|^2 \id$;
\item[(ii)] For every $i\in \{1, \ldots, N\}$ there exists a positive constant $C_i$ such that
\begin{equation} \label{eq:subsolution 2}
v_i\otimes v_i- u_i < \frac{C_i}{2} \id\, .
\end{equation}
\item[(iii)] The triple $(\overline{\rho}, \overline{v}, \overline{u})$ solves the following system in the
sense of distributions:
\begin{align}
&\partial_t \overline{\rho} + {\rm div}_x (\overline{\rho} \, \overline{v}) \;=\; 0\label{eq:continuity}\\
&\partial_t (\overline{\rho} \, \overline{v})+{\rm div}_x \left(\overline{\rho} \, \overline{u} 
\right) + \nabla_x \left( p(\overline{\rho})+\frac{1}{2} \left(\sum_i C_i \rho_i
\bm{1}_{P_i} + \overline{\rho} |\overline{v}|^2 \bm{1}_{P_+\cup P_-}\right)\right)= 0\label{eq:momentum}
\end{align}
\end{itemize}
\end{definition}

\begin{definition}[Admissible fan subsolutions]\label{d:admiss}
 A fan subsolution $(\overline{\rho}, \overline{v}, \overline{u})$ is said to be {\em admissible}
if it satisfies the following inequality in the sense of distributions
\begin{align} 
&\dt \left(\overline{\rho} \varepsilon(\overline{\rho})\right)+\div_x
\left[\left(\overline{\rho}\varepsilon(\overline{\rho})+p(\overline{\rho})\right) \overline{v}\right]
 + \dt \left( \overline{\rho} \frac{|\overline{v}|^2}{2} \bm{1}_{P_+\cup P_-} \right)
+ \div_x \left(\overline{\rho} \frac{|\overline{v}|^2}{2} \overline{v} \bm{1}_{P_+\cup P_-}\right)\nonumber\\
&\qquad\qquad+ \sum_{i=1}^N \left[\dt\left(\rho_i \, \frac{C_i}{2} \, \bm{1}_{P_i}\right) 
+ \div_x\left(\rho_i \, \overline{v} \, \frac{C_i}{2}  \, \bm{1}_{P_i}\right)\right]
\;\leq\; 0\, .\label{eq:admissible subsolution}
\end{align}
\end{definition}

It is possible to generalize these notions in several directions, e.g. allowing partitions
with more general open sets and functions $v_i, u_i$ and $\rho_i$ which vary (for instance continuously) 
in each element of the partition. It is not difficult to extend the conclusions of the next subsection
to such settings. However we have chosen to keep the definitions to the minimum needed for our proof
of Theorem \ref{t:main}.

\subsection{Reduction to admissible fan subsolutions} Using the techniques introduced in \cite{dls1}-\cite{dls2}
we then reduce Theorem \ref{t:main} to finding an admissible fan subsolution. The precise statement is given in
the following proposition.

\begin{proposition}\label{p:subs}
Let $p$ be any $C^1$ function and $(\rho_\pm, v_\pm)$ be such that there exists at least one
admissible fan subsolution $(\overline{\rho}, \overline{v}, \overline{u})$ of \eqref{eq:Euler system}
with initial data \eqref{eq:R_data}. Then there are infinitely 
many bounded admissible solutions $(\rho, v)$ to \eqref{eq:Euler system}-\eqref{eq:R_data} such that 
$\rho=\overline{\rho}$.
\end{proposition}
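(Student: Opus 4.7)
Given an admissible fan subsolution, the plan is to set $\rho := \overline{\rho}$ and construct the velocity $v$ cell by cell --- taking $v \equiv v_\pm$ on the outer regions $P_\pm$ and producing, by convex integration on each inner cell $P_i$, infinitely many oscillatory $v \in L^\infty(P_i)$ with $|v|^2 = C_i$ a.e. that solve the incompressible Euler system with $\rho = \rho_i$. The distributional identities \eqref{eq:continuity}--\eqref{eq:momentum} and the inequality \eqref{eq:admissible subsolution} of the subsolution are precisely what is needed to glue the cellwise solutions into a global admissible solution of \eqref{eq:Euler system}--\eqref{eq:R_data}.

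On each cell $P_i$, $\overline{\rho} \equiv \rho_i$ is constant, so the compressible system reduces to the incompressible one: introducing the matrix unknown $u := v\otimes v - \tfrac12 |v|^2\,\id \in \Sym_0^{2\times 2}$ and demanding $|v|^2 = C_i$, solving \eqref{eq:Euler system} with density $\rho_i$ on $P_i$ becomes equivalent to the linear PDE system
\begin{equation}\label{eq:plan-lin}
\partial_t v + \div_x u = 0, \qquad \div_x v = 0
\end{equation}
together with the pointwise algebraic constraint
\begin{equation}\label{eq:plan-nl}
v \otimes v = u + \tfrac{C_i}{2}\,\id, \qquad |v|^2 = C_i \qquad \text{a.e. in } P_i.
\end{equation}
Hypothesis (ii) of Definition \ref{d:subs}, namely $v_i\otimes v_i - u_i < \tfrac{C_i}{2}\id$, says precisely that the constant pair $(v_i, u_i)$ lies strictly inside the relaxed version of the constraint set in \eqref{eq:plan-nl}. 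I would then invoke a straightforward adaptation to the open cell $P_i$ of the Baire category / plane-wave scheme of \cite[Sections 2--3]{dls2}, using $(v_i, u_i)$ as the seed: this produces a dense $G_\delta$-set (hence uncountably many) of bounded velocity fields $v$ on $P_i$ which satisfy \eqref{eq:plan-nl} and for which the associated pair $(v,u)$, once extended by $(v_\pm, u_\pm)$ on $P_\pm$, solves \eqref{eq:plan-lin} globally in the distributional sense on $\R^2\times]0,\infty[$.

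With such $v$ in hand on every $P_i$, I would verify the weak formulation. For the momentum equation \eqref{eq:weak2} one splits the test integral over the cells: the pointwise identity \eqref{eq:plan-nl} (and its analogue $v_\pm\otimes v_\pm = u_\pm + \tfrac12 |v_\pm|^2\id$ on $P_\pm$) rewrites $\rho\,v\otimes v : D\phi$ as $\overline{\rho}\,u : D\phi$ plus the pressure-like correction $\tfrac12\bigl(\sum_i C_i\rho_i\bm{1}_{P_i} + \overline{\rho}|\overline{v}|^2\bm{1}_{P_+\cup P_-}\bigr)\div\phi$, and the global linear equation \eqref{eq:plan-lin} then reduces the momentum integral to the distributional content of \eqref{eq:momentum}, which holds by assumption. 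Equation \eqref{eq:weak1} is handled analogously from $\div v = 0$ inside each cell and \eqref{eq:continuity}. For admissibility, the identity $\rho|v|^2/2 = \rho_i C_i/2$ on $P_i$ lets us trade the oscillating kinetic energy for the piecewise-constant expression appearing in \eqref{eq:admissible subsolution}, whence \eqref{eq:admissibility condition}.

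The main obstacle is the convex-integration step: one must arrange the oscillations on the open wedge $P_i$ so that \eqref{eq:plan-lin} holds globally across the interfaces $\{x_2 = \nu_j t\}$, not merely cellwise, since otherwise the wrong Rankine--Hugoniot jumps would be produced. A naive compactly-supported perturbation fails because the strict inequality $|v_i|^2 < C_i$ (obtained by taking the trace in (ii)) forbids setting $v \equiv v_i$ on a neighborhood of $\partial P_i$; the correct construction must keep the oscillations active up to $\partial P_i$ while making the boundary flux trace of $(v,u)$ match that of $(v_i, u_i)$ in the appropriate distributional sense. This is the technical heart of the argument, modeled on the piecewise-constant subsolution idea of \cite{sz}. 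Everything else --- the pointwise algebra of Step one and the bookkeeping of Step three --- is elementary once the cellwise construction is in hand.
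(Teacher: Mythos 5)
Your overall architecture is exactly the paper's: convex integration on each cell $P_i$ against the constant subsolution state $(v_i,u_i)$ to enforce $|v|^2 = C_i$ a.e., with the piecewise-constant subsolution PDEs handling the interfaces, and the technical heart being Lemma~\ref{l:ci}. However, one of your intermediate claims is wrong and one step is missing. The claim that the globally extended pair $(v,u)$ (equal to the constructed fields on $P_i$ and $(v_\pm,u_\pm)$ on $P_\pm$) solves \eqref{eq:plan-lin} on all of $\R^2\times]0,\infty[$ is false: taking divergence, $\div_x v=\div_x\overline{v}$, and the piecewise constant $\overline{v}$ is generically not divergence free across $\{x_2=\nu_j t\}$ --- the subsolution only guarantees $\partial_t\overline{\rho}+\div_x(\overline{\rho}\overline{v})=0$ and the $\rho$-weighted momentum identity \eqref{eq:momentum}. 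What actually propagates globally is the linear PDE for the \emph{perturbations} $(\underline{v}_i,\underline{u}_i):=(v-v_i,u-u_i)$ extended by zero outside $P_i$, which is precisely property (ii) of Lemma~\ref{l:ci}. The bookkeeping then runs through the $\rho$-weighted quantities: one uses $\overline{\rho}\underline{v}_i=\rho_i\underline{v}_i$ (density is constant on the support of $\underline{v}_i$), so that $\partial_t(\rho_i\underline{v}_i)+\div_x(\rho_i\underline{u}_i)=0$ and $\div_x(\rho_i\underline{v}_i)=0$ hold globally, and these terms drop out, leaving exactly \eqref{eq:continuity}--\eqref{eq:momentum}. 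Your reduction as stated, resting on the global incompressibility of $v$ itself, would not go through.

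The second issue: you verify \eqref{eq:weak1}--\eqref{eq:admissibility condition} only against test functions supported in $\R^2\times]0,\infty[$ and do not explain why the initial-data terms in the full weak formulation are recovered. This is not automatic here, because the oscillations reach all the way down to $t=0$ along the wedge tip; what saves the day is that the Lebesgue measure of $P_i\cap\{t=\tau\}$ tends to $0$ as $\tau\downarrow 0$, so $(\rho(\cdot,\tau),v(\cdot,\tau))\to(\rho^0,v^0)$ strongly in $L^1_{loc}$, and a standard temporal cut-off argument then upgrades the interior identities to the stated weak formulation including the initial data. You should include this step.
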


The core of the proof is in fact a corresponding statement for subsolutions of the {\em incompressible
Euler equations} which is essentially contained in the proofs of \cite{dls1}-\cite{dls2}. However, since our
assumptions and conlusions are slightly different, we state them in the next lemma.

\begin{lemma}\label{l:ci}
Let $(\tilde{v}, \tilde{u})\in \R^2\times \Sym_0^{2\times 2}$ and $C>0$ be such that $\tilde{v}\otimes \tilde{v}
- \tilde{u} < \frac{C}{2} \id$. For any open set $\Omega\subset \R^2\times \R$ there are infinitely many maps
$(\underline{v}, \underline{u}) \in L^\infty (\R^2\times \R , \R^2\times \Sym_0^{2\times 2})$ with the following property
\begin{itemize}
\item[(i)] $\underline{v}$ and $\underline{u}$ vanish identically outside $\Omega$;
\item[(ii)] $\div_x \underline{v} = 0$ and $\partial_t \underline{v} + \div_x \underline{u} = 0$;
\item[(iii)] $ (\tilde{v} + \underline{v})\otimes (\tilde{v} + \underline{v}) - (\tilde{u} + \underline{u}) = \frac{C}{2} \id$
a.e. on $\Omega$.
\end{itemize}
\end{lemma}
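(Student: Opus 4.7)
The plan is to recognize the lemma as a differential inclusion and solve it via the Baire category / convex integration scheme of \cite{dls1, dls2}. Setting $v := \tilde v + \underline v$ and $u := \tilde u + \underline u$, conditions (i)--(iii) translate into: $(v, u)$ equals the constant $(\tilde v, \tilde u)$ outside $\Omega$; the linear system
\[
\div_x v = 0, \qquad \partial_t v + \div_x u = 0
\]
holds in the sense of distributions (using that $\tilde v, \tilde u$ are constants); and $v \otimes v - u = \tfrac{C}{2} \id$ a.e.\ on $\Omega$. The hypothesis $\tilde v \otimes \tilde v - \tilde u < \tfrac{C}{2} \id$ places the base state $(\tilde v, \tilde u)$ strictly inside the open constraint set $K := \{(a, M) : a \otimes a - M < \tfrac{C}{2} \id\}$.

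First I would define the space $X_0$ of \emph{strict subsolutions}: smooth maps $(v, u)$ on $\Omega$, extended by $(\tilde v, \tilde u)$ outside, satisfying the linear constraints and $v \otimes v - u < \tfrac{C}{2} \id$ pointwise on $\Omega$. This set is nonempty because the base state belongs to it. Let $X$ be the closure of $X_0$ in a fixed ball of $L^\infty(\R^2 \times \R)$ equipped with the weak-$*$ topology, which is metrizable on bounded sets; then $X$ is a compact, hence complete, metric space to which Baire's theorem applies. The functional
\[
F(v, u) := \int_\Omega \bigl( C - |v|^2 \bigr) \, dx \, dt
\]
is nonnegative on $X$ (since $|v|^2 = \tr(v \otimes v - u) \le C$ after passing to the weak-$*$ closure) and weak-$*$ upper semicontinuous, hence of Baire class 1. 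Standard Baire theory then gives a residual, dense, uncountable set $R \subset X$ of continuity points of $F$.

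Next I would show that every $(v, u) \in R$ satisfies $F(v, u) = 0$, so that $|v|^2 = C$ a.e.\ on $\Omega$. Combined with $v \otimes v - u \le \tfrac{C}{2} \id$ and $\tr u = 0$, the two eigenvalues of the symmetric matrix $v \otimes v - u$ sum to $C$ and are each bounded by $C/2$; so both equal $C/2$, forcing $v \otimes v - u = \tfrac{C}{2} \id$ a.e.\ on $\Omega$. Setting $(\underline v, \underline u) := (v - \tilde v, u - \tilde u)$ then yields the desired maps, and the uncountability of $R$ produces infinitely many of them.

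The main obstacle, and the only step where geometry specific to the problem enters, is the \emph{perturbation property} underpinning the Baire argument: for every $(v, u) \in X_0$ one must construct perturbations $(w_k, z_k)$ compactly supported in $\Omega$, satisfying the linear constraints, with $(v + w_k, u + z_k) \in X_0$, converging to $0$ in the weak-$*$ topology, and with
\[
\liminf_{k \to \infty} \int_\Omega |w_k|^2 \, dx\, dt \;\ge\; \beta\!\left( \int_\Omega \bigl( C - |v|^2 \bigr) \right)
\]
for a fixed modulus $\beta$ with $\beta(r) > 0$ whenever $r > 0$. These perturbations are built exactly as in \cite{dls1} from localized plane-wave solutions of the linear system along directions in its wave-cone, the key algebraic input being that the constraint set contains $\Lambda$-segments of controlled length through every strict subsolution (the $\Lambda$-convex hull computation of \cite{dls1}). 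Once this perturbation property is available, a routine argument shows that any $(v, u) \in X$ with $v \otimes v - u$ failing to saturate the constraint on a set of positive measure must be a discontinuity point of $F$; hence $F \equiv 0$ on $R$, completing the proof.
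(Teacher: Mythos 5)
Your overall strategy mirrors the paper's: define the space $X_0$ of strict subsolutions satisfying the linear constraints, take the weak-$\star$ closure $X$, apply Baire category to a Baire-1 functional, and then use a perturbation property built from localized plane waves and the $\Lambda$-convex hull geometry of \cite{dls1,dls2} to conclude that at continuity points the constraint is saturated. The algebraic deduction that $|v|^2 = C$ a.e.\ together with $v\otimes v - u \le \tfrac{C}{2}\id$ and $\tr u = 0$ forces $v\otimes v - u = \tfrac{C}{2}\id$ a.e.\ is also correct.

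However, there is a genuine gap in the specific Baire-1 functional you propose. You define
\[
F(v,u) := \int_\Omega \bigl(C - |v|^2\bigr)\,dx\,dt
\]
and argue that $F$ is nonnegative and upper semicontinuous. The difficulty is that $\Omega$ is an arbitrary open subset of $\R^2\times\R$, and in the application that matters for Proposition \ref{p:subs} it is always \emph{unbounded}: there $\Omega = P_i$ is a wedge of infinite volume. For the base subsolution $(\tilde v, \tilde u)$ the integrand $C - |\tilde v|^2$ is a positive constant, so $F(\tilde v, \tilde u) = +\infty$, and more generally $F \equiv +\infty$ on all of $X_0$ (since a compactly supported perturbation changes the value of the integrand only on a bounded set). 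Thus $F$ is not real-valued, upper semicontinuity in the extended sense does not help, continuity points are meaningless, and the whole Baire scheme collapses as stated. The paper avoids this precisely by never forming a single global functional: it instead considers the countable family of restriction maps $I_N : X \to L^2(B_N(0)\times ]-N,N[)$ with the strong $L^2$ topology, shows each $I_N$ is Baire-1, and intersects the (residual) sets of continuity points of all the $I_N$. Equivalently one could use the localized functionals $F_N(v,u) := \int_{\Omega\cap (B_N\times]-N,N[)} (C - |v|^2)$. Either fix is routine, but it must be made; as written your $F$ does not define a well-posed Baire argument on the sets $\Omega$ actually used in the paper. The remainder of your sketch (the perturbation estimate via plane waves and the geometric lemma, and the derivation of $v\otimes v - u = \tfrac{C}{2}\id$ from $|v|^2 = C$) matches the paper's argument and would go through once the localization is installed.
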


The proof is a minor variant of the ones given in \cite{dls1}-\cite{dls2} but since none of the
statements present in the literature matches exactly the one of Lemma \ref{l:ci} we give some of the
details in the next Section, referring to precise lemmas in the papers \cite{dls1}-\cite{dls2}. For the moment
we show how Proposition \ref{p:subs} derives from Lemma \ref{l:ci}.

\begin{proof}[Proof of Proposition \ref{p:subs}] We apply Lemma \ref{l:ci} in each region $\Omega = P_i$ and
we call $(\underline{v}_i, \underline{u}_i)$ any pair of maps given by such Lemma. Hence we set
\begin{align}
& v : = \overline{v} + \sum_{i=1}^N \underline{v}_i\\
& u := \overline{u} + \sum_{i=1}^N \underline{u}_i
\end{align}
whereas $\rho = \overline{\rho}$ (as claimed in the statement of the Proposition!). We next show that 
the pair $(\rho, v)$ is an admissible weak solution of \eqref{eq:Euler system}-\eqref{eq:R_data}. First observe that
$\div_x (\rho_i \underline{v}_i) =0$ since $\rho_i$ is a constant. But since $\underline{v}_i$ is supported in $P_i$
and $\rho = \overline{\rho} \equiv \rho_i$ on $P_i$, we then conclude $\div_x (\overline{\rho} \underline{v}_i) = 0$. Thus 
we have 
\begin{align}
&\partial_t \rho + \div_x (\rho v) = \partial_t \overline{\rho} + \div_x \left(\overline{\rho} \overline{v} +
\sum_i \overline{\rho} \underline{v}_i\right) \nonumber\\
=&\partial_t \overline{\rho} + \div_x (\overline{\rho} \overline{v}) + \sum_i \div_x (\overline{\rho} \underline{v}_i) 
= \partial_t \overline{\rho} + \div_x (\overline{\rho} \overline{v}) = 0\, 
\end{align}
in the sense of distributions.
Moreover, observe that
\[
v\otimes v =
\left\{
\begin{array}{ll}
v_+\otimes v_+ &\mbox{on $P_+$}\\
v_-\otimes v_- &\mbox{on $P_-$}\\
(v_i + \underline{v}_i)\otimes (v_i+\underline{v}_i) = u_i + \underline{u}_i + \frac{C_i}{2}\id \quad &\mbox{on $P_i$} 
\end{array}
\right.
\]
and
\[
\overline{u} =
\left\{
\begin{array}{ll}
v_+\otimes v_+ - \frac{1}{2} |v_+|^2 \id \quad&\mbox{on $P_+$}\\ \\
v_-\otimes v_- - \frac{1}{2} |v_-|^2 \id \quad&\mbox{on $P_-$}\\ \\
u_i &\mbox{on $P_i$}\, . 
\end{array}
\right.
\]
Moreover, on each region $P_i$ we have
\[
\rho v \otimes v =  \rho \left(v\otimes v - \frac{|v|^2}{2} {\rm Id}\right) + \frac{\rho C_i}{2}  {\rm Id} 
= \overline{\rho} \overline{u} + \rho_i \underline{u}_i + \frac{C_i \rho_i}{2}  {\rm Id}\, .
\]
Hence, we can write
\begin{align}
& \partial_t (\rho v) + \div_x (\rho v\otimes v) + \nabla_x
 [p (\rho)] = \partial_t \left(\overline{\rho}\overline{v} +
\sum_i \rho_i \underline{v}_i\right) + \div_x \left(\overline{\rho}\overline{u} + \sum_i \rho_i \underline{u}_i\right)\nonumber\\
&\qquad + \nabla_x \left(p (\overline{\rho}) + \frac{1}{2} \sum_i C_i\rho_i \bm{1}_{P_i} + \frac{1}{2} |v_-|^2 \rho_- \bm{1}_{P_-}
+ \frac{1}{2} |v_+|^2 \rho_+ \bm{1}_{P_+}\right)\nonumber\\
= & \partial_t \left(\overline{\rho}\overline{v}\right) +  \div_x \left(\overline{\rho}\overline{u}\right) +
\nabla_x \left(p (\overline{\rho}) + \frac{1}{2} \sum_i C_i\rho_i \bm{1}_{P_i} + \frac{1}{2} |v_-|^2 \rho_- \bm{1}_{P_-}
+ \frac{1}{2} |v_+|^2 \rho_+ \bm{1}_{P_+}\right)\nonumber\\
&\qquad + \sum_i \rho_i \underbrace{\partial_t \underline{v}_i + \div_x \underline{u}_i}_{= 0}\, .
\end{align}
Therefore, by Definition \ref{d:subs} we conclude $\partial_t (\rho v) + \div_x (\rho v\otimes v) + \nabla_x [p (\rho)] = 0$.

Next, we compute 
\begin{align}
& \partial_t \left(\rho \eps (\rho) + \frac{|v|^2}{2} \rho\right) + \div_x \left(\left(\rho \eps (\rho) + \frac{|v|^2}{2} \rho
+ p (\rho)\right)v\right)\nonumber\\
= & \partial_t \left(\overline{\rho}\eps (\overline{\rho}) + \sum_i \frac{1}{2} C_i \rho_i \bm{1}_{P_i} + \frac{|v_-|^2}{2} \rho_-
\bm{1}_{P_-} + \frac{|v_+|^2}{2} \rho_+ \bm{1}_{P_+}\right)\nonumber\\
&\quad + \div_x \left[ \left(\overline{\rho}\eps (\overline{\rho}) + p (\overline{\rho}) + \sum_i \frac{1}{2} C_i \rho_i \bm{1}_{P_i} + \frac{|v_-|^2}{2} \rho_-
\bm{1}_{P_-} + \frac{|v_+|^2}{2} \rho_+ \bm{1}_{P_+}\right)\left( \overline{v} + \sum_i \underline{v}_i\right)\right]\nonumber
\end{align}
Using the condition \eqref{eq:admissible subsolution} we therefore conclude 
\begin{align}
& \partial_t \left(\rho \eps (\rho) + \frac{|v|^2}{2} \rho\right) + \div_x \left(\left(\rho \eps (\rho) + \frac{|v|^2}{2} \rho
+ p (\rho)\right)v\right)\nonumber\\
\leq & \sum_i \div_x \Big[ \underline{v}_i \Big(\underbrace{\overline{\rho}\eps (\overline{\rho}) + p (\overline{\rho}) + \sum_i \frac{1}{2} C_i \rho_i \bm{1}_{P_i} + \frac{|v_-|^2}{2} \rho_-
\bm{1}_{P_-} + \frac{|v_+|^2}{2} \rho_+ \bm{1}_{P_+}}_{=: \varrho}\Big)\Big]
\end{align}
in the sense of distributions. Observe however that the function $\varrho$ is constant on each $P_i$, 
on which $\underline{v}_i$ is 
supported. Thus
\begin{align}
& \partial_t \left(\rho \eps (\rho) + \frac{|v|^2}{2} \rho\right) + \div_x \left(\left(\rho \eps (\rho) + \frac{|v|^2}{2} \rho
+ p (\rho)\right)v\right) \leq \sum_i \varrho \div_x \underline{v}_i = 0\, .
\end{align}
So far we have shown that \eqref{eq:weak1}, \eqref{eq:weak2} and \eqref{eq:admissibility condition}
hold whenever the corresponding test functions are supported in $\R^2\times ]0, \infty[$. However observe that,
since as $\tau\downarrow 0$ the Lebesgue measure of $P_i \cap \{t = \tau\}$ converges to $0$, 
the maps $\rho (\cdot , \tau)$ and $v(\cdot , \tau)$ converge to the maps $\rho^0$ and $v^0$ of \eqref{eq:R_data}
strongly in $L^1_{loc}$. This easily implies \eqref{eq:weak1}, \eqref{eq:weak2} and \eqref{eq:admissibility condition}
in their full generality. For instance, assume $\psi\in C^\infty_c (\R^2\times ]-\infty, \infty[)$ and
consider a smooth cut-off function $\vartheta$ of time only which vanishes identically on $]-\infty, \eps]$
and equals $1$ on $]\delta, \infty[$, where $0<\eps<\delta$. We know therefore that \eqref{eq:weak1}
holds for the test function $\psi \vartheta$, which implies that
\[
\int_0^\infty \int_{\R^2} \vartheta \left[\rho\dt \psi+ 
\rho v \cdot \nabla_x \psi\right] dx dt + \int_0^\delta \int_{\R^2} \vartheta' (t)
\rho (x,t) \psi (x,t) dx dt = 0\,  .
\]
Fix $\delta$ and choose a sequence of $\vartheta$ converging uniformly to the function
\[
\eta (t) = \left\{
\begin{array}{ll}
0 & \mbox{if $t\leq 0$}\\
1 & \mbox{if $t \geq \delta$}\\
\frac{t}{\delta} \qquad &\mbox{if $0\leq t \leq \delta$}
\end{array}
\right.
\]
and such that their derivatives $\vartheta'$ converge pointwise to $\frac{1}{\delta} \bm{1}_{]0, \delta[}$. We then conclude
\[
\int_0^\infty \int_{\R^2} \eta \left[\rho\dt \psi+ 
\rho v \cdot \nabla_x \psi\right] dx dt + \frac{1}{\delta} \int_0^\delta \int_{\R^2} 
\rho (x,t) \psi (x,t) dx dt = 0\,  .
\]
Letting $\delta\downarrow 0$ we conclude \eqref{eq:weak1}. 

The remaining conditions \eqref{eq:weak2} and \eqref{eq:admissibility condition} are achieved with analogous
arguments, which we leave to the reader.
\end{proof}

\section{Proof of Lemma \ref{l:ci}}\label{s:CI}

\subsection{Functional set-up} We define $X_0$ to be the space of 
$(\underline{v}, \underline{u})\in C^\infty_c (\Omega, \R^2\times
\Sym_0^{2\times 2})$ which satisfy (ii) and the pointwise inequality 
$(\tilde{v} + \underline{v})\otimes (\tilde{v} + \underline{v}) - (\tilde{u} + \underline{u}) < \frac{C}{2} \id$.
We then take the closure $X$ of $X_0$ in the $L^\infty$ weak$^\star$ topology and recall that, since $X$ is a bounded (weakly$^\star$)
closed subset of $L^\infty$ such topology is metrizable on $X$, giving a complete metric space $(X,d)$. 
Observe that any element in $X$ satisfies (i) and (ii)
and we want to show that on a residual set (in the sense of Baire category) (iii) holds. 
We then define for any $N\in \N\setminus \{0\}$ the map $I_N$ as follows: to $(\underline{v}, \underline{u})$ we associate
the corresponding restrictions of these maps to $B_N (0) \times ]-N, N[$. We then consider $I_N$ as a map from $(X,d)$ to 
$Y$, where $Y$ is the space $L^\infty (B_N (0)\times ]-N, N[, \R^2\times \Sym_0^{2\times 2}$) endowed with the {\em strong}
$L^2$ topology. Arguing as in \cite[Lemma 4.5]{dls1} it is easily seen that $I_N$ is a Baire-1 map and hence, from
a classical theorem in Baire category, its points of continuity are a residual set in $X$. We claim that
\begin{itemize}
 \item[(Con)] if $(\underline{v},
\underline{u})$ is a point of continuity of $I_N$, then (iii) holds a.e. on $B_N (0)\times ]-N, N[$.
\end{itemize}
(Con) implies then (iii) for those maps at which {\em all} $I_N$ are continuous (which is also a residual set). 

The proof of (Con) is achieved as in \cite[Lemma 4.6]{dls1} showing that:
\begin{itemize}
\item[(Cl)] If $(\underline{v}, \underline{u})\in X_0$, then  
there is a sequence
$(v_k, u_k)\subset X_0$ converging weakly$^\star$ to $(\underline{v},\underline{u})$ for which 
\[
\liminf_k \|\tilde{v} + v_k\|_{L^2 (\Gamma)} \geq 
\|\tilde{v} + \underline{v}\|^2_{L^2 (\Gamma)} + \beta \left(C |\Gamma| - \|\tilde{v} + 
\underline{v}\|^2_{L^2 (\Gamma)}\right)^2\, ,
\]
where $\Gamma= B_N (0)\times ]-N, N[$ and $\beta$ depends only on $\Gamma$. 
\end{itemize}

Indeed
assuming that (Cl) holds, fix then a point
$(\underline{v}, \underline{u})\in X$ where $I_N$ is continuous and assume by contradiction
that (iii) does not hold on $\Gamma$. By definition of $X$ there is a sequence 
$(\underline{v}_k, \underline{u}_k)\subset X_0$ converging weakly$^\star$ to 
$(\underline{v}, \underline{u})$. Since the latter
is a point of continuity for $I_N$, we then have that $\underline{v}_k \to \underline{v}$ strongly in $L^2 (\Gamma)$. 
We apply (Cl) to each $(\underline{v}_k, \underline{u}_k)$ and find a sequence $\{(v_{k,j}, u_{k,j})\}$ such that
\[
\liminf_j \|\tilde{v} + v_{k,j}\|_{L^2 (\Gamma)} \geq 
\|\tilde{v} + \underline{v}_k\|^2_{L^2 (\Gamma)} + \beta \left(C |\Gamma| - \|\tilde{v} + 
\underline{v}_k\|^2_{L^2 (\Gamma)}\right)^2\, 
\]
and $(v_{k,j}, u_{k,j})\rightharpoonup^\star (\underline{v}_k, \underline{u}_k)$.
A standard diagonal argument then allows to conclude the existence of a sequence $(v_{k, j(k)}, u_{k, j(k)})$
which converges weakly$^\star$ to $(\underline{v}, \underline{u})$ and such that
\[
 \liminf_k \|\tilde{v} + v_{k,j (k)}\|_{L^2 (\Gamma)} \geq 
\|\tilde{v} + \underline{v}\|^2_{L^2 (\Gamma)} + \beta \left(C |\Gamma| - \|\tilde{v} + 
\underline{v}\|^2_{L^2 (\Gamma)}\right)^2 > \|\tilde{v} + \underline{v}\|^2_{L^2 (\Gamma)}\, .
\]
However this contradicts the assumption that $(\underline{v}, \underline{u})$ is a point of continuity
for $I_N$.

In order to construct the sequence of (Cl) we appeal to the following
Proposition and Lemma.

\begin{proposition}[Localized plane waves]\label{p:local_waves}
Consider a segment $\sigma = [- p, p]\subset \R^2\times \Sym_0^{2\times 2}$, where $p =\lambda [(a, a\otimes a)
- (b, b\otimes b)]$ for some $\lambda>0$ and $a \neq \pm b$ with $|a|=|b| = \sqrt{C}$. Then there exists a pair 
$(v, u)\in C^\infty_c (B_1 (0)\times ]-1,1[)$ which solves 
\begin{equation}\label{eq:linear_PDE}
\left\{\begin{array}{l}
\partial_t v + {\rm div}_x u = 0\\ \\
{\rm div}_x v = 0
\end{array}\right.
\end{equation}
and such that
\begin{itemize}
\item[(i)] The image of $(v,u)$ is contained in an $\eps$-neighborhood of $\sigma$ and $\int (v,u)\, dx\, dt = 0$;
\item[(ii)] $\int |v(x,t)|\, dx\, dt \geq \alpha \lambda |b-a|$, where $\alpha$ is a positive 
constant depending only on $C$.
\end{itemize}
\end{proposition}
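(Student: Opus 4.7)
The plan is to follow the standard ``localized plane wave'' strategy underlying the Tartar framework of \cite{dls1,dls2}. The first step is to verify that $p$ lies in the wave cone $\Lambda$ of the linear system \eqref{eq:linear_PDE}, meaning that there exists $\xi=(\xi_0,\xi_x)\in(\R\times\R^2)\setminus\{0\}$ for which $(v,u)(t,x)=h(\xi_0 t+\xi_x\cdot x)\,p$ solves \eqref{eq:linear_PDE} for every scalar profile $h$. Writing $p=(\bar a,\bar U)$ with $\bar a=\lambda(a-b)$ and $\bar U=\lambda(a\otimes a-b\otimes b)$, the hypothesis $|a|=|b|=\sqrt{C}$ guarantees $\tr\bar U=0$, and the PDEs reduce to the algebraic identities $\bar a\cdot\xi_x=0$ and $\xi_0\bar a+\bar U\xi_x=0$. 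Taking $\xi_x=(a-b)^\perp$ (nonzero because $a\neq b$) forces $a\cdot\xi_x=b\cdot\xi_x=:c$, whence $\bar U\xi_x=\lambda c(a-b)=c\bar a$, and the second identity is solved by $\xi_0=-c$. This produces the required nonzero wave vector.

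With $\xi$ at hand I would consider the sinusoidal plane wave $(V_N,U_N)(t,x):=\sin\bigl(N\xi\cdot(t,x)\bigr)\,p$, which solves \eqref{eq:linear_PDE}, takes values exactly on $\sigma$, and has vanishing mean on every period. To localize it inside $B_1(0)\times{]-1,1[}$ I would use a potential representation: a constant-coefficient differential operator $\mathcal{L}$ of some order $k\geq 1$ such that every $\mathcal{L}\phi$ solves \eqref{eq:linear_PDE}, with the property that the plane wave above can be written as $\mathcal{L}\phi_N$ for a scalar (or vector) potential $\phi_N$ whose amplitude is $O(N^{-k})$. This is obtained by inverting $k$ derivatives of $\sin(N\xi\cdot(t,x))$, and rests only on the algebraic identity $L(\xi)p=0$ for the symbol $L$ of the system. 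Picking $\chi\in C^\infty_c(B_1(0)\times{]-1,1[})$ with $0\leq\chi\leq 1$ and $\chi\equiv 1$ on $B_{1/2}(0)\times{]-1/2,1/2[}$, I set $(v,u):=\mathcal{L}(\chi\phi_N)$: this automatically belongs to $C^\infty_c(B_1(0)\times{]-1,1[})$ and solves \eqref{eq:linear_PDE}, and since $\mathcal{L}$ contains at least one derivative the mean-zero part of (i) follows by integration by parts.

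For the remaining estimates, Leibniz gives $(v,u)=\chi(V_N,U_N)+R_N$, where the commutator $R_N=[\mathcal{L},\chi]\phi_N$ places at least one derivative on $\chi$ rather than on $\phi_N$ and is therefore $O(N^{-1})$ in $L^\infty$. For $N$ sufficiently large $\|R_N\|_\infty<\eps$, and since $\chi(V_N,U_N)$ takes values in $\{sp:|s|\leq 1\}=\sigma$, the image of $(v,u)$ lies within $\eps$ of $\sigma$, establishing (i). For (ii), on the core set $B_{1/2}(0)\times{]-1/2,1/2[}$ where $\chi\equiv 1$ one has $|V_N(t,x)|=\lambda|a-b|\,\bigl|\sin(N\xi\cdot(t,x))\bigr|$, and the Riemann--Lebesgue lemma gives, for $N$ large enough, $\int|V_N|\,dx\,dt\geq c_0\,\lambda|a-b|$ for an absolute constant $c_0>0$; absorbing the $O(N^{-1})$ error from $R_N$ then yields (ii) with $\alpha$ depending only on $C$ through the normalization of $\xi$. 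The principal obstacle, treated essentially by the analogous lemma in \cite{dls1}, is constructing the potential operator $\mathcal{L}$ realizing every wave-cone element as a higher-order derivative of a potential with the claimed amplitude control; this is the point at which the algebraic structure of $\Lambda$ for the underdetermined system \eqref{eq:linear_PDE} is genuinely used.
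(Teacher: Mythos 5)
Your proposal is correct and follows essentially the same strategy as the paper: identify a wave vector $\xi$ placing $p$ in the wave cone, produce a rapidly oscillating plane wave via a higher-order potential, cut it off, and bound the commutator error in $L^\infty$. The paper realizes the potential operator concretely by embedding $(v,u)$ into a divergence-free, trace-free symmetric $3\times 3$ matrix field and invoking \cite[Proposition 4]{dls2} (a third-order operator $A(\partial)$), which is exactly the structural ingredient you correctly flag as the key point to borrow; note only that the existence of such an operator is a special feature of this particular underdetermined system, not an automatic consequence of the pointwise identity $L(\xi)p=0$.
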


In order to the state the next lemma, it is convenient to introduce the following notation.

\begin{definition}\label{d:K}
Let $C>0$ be the positive constant of Lemma \ref{l:ci}. We let $\mathcal{U}$ be the subset of $\R^2\times
\Sym^{2\times 2}_0$ consisting of those pairs $(a,A)$ such that $a\otimes a - A < \frac{C}{2} {\rm Id}$. 
\end{definition}

\begin{lemma}[Geometric lemma]\label{l:geometric}
There exists a geometric constant $c_0$ with the following property. Assume $(a, A)\in \mathcal{U}$.
Then there is a segment $\sigma$ as in Proposition \ref{p:local_waves} with $(a, A) + \sigma \subset \mathcal{U}$
and $\lambda |b-a| \geq c_0 (C- |a|^2)$. 
\end{lemma}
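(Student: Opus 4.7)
The plan is to parametrise $\bar a = c+d$, $\bar b = c-d$, so that $|\bar a|=|\bar b|=\sqrt C$ becomes $c\cdot d=0$ and $|c|^2+|d|^2=C$, while $\bar a\neq\pm\bar b$ is equivalent to $c\neq 0$ and $d\neq 0$. Set $\mathbf E := \frac{C}{2}\id - a\otimes a + A$, which is positive definite by the assumption $(a,A)\in\mathcal U$ and has $\tr\mathbf E = C-|a|^2 =: \mu$. Writing $q := 2\lambda d$ and $R := 2\lambda\bigl[(c-a)\otimes d + d\otimes(c-a)\bigr]$, a direct computation shows that $(a,A)+tp\in\mathcal U$ for all $t\in[-1,1]$ is equivalent to the matrix inequality $f(t) := \mathbf E + t R - t^2\, q\otimes q > 0$ on $[-1,1]$. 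A useful preliminary observation is that only the endpoints need to be checked, because
\[
f(1-2\alpha)-\bigl[\alpha f(-1)+(1-\alpha)f(1)\bigr] = 4\alpha(1-\alpha)\, q\otimes q \geq 0 \qquad (\alpha\in[0,1])
\]
shows that $\{t : f(t)>0\}$ is convex in $t$, so $f(\pm 1)>0$ already implies $f(t)>0$ for all $t\in[-1,1]$.

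The key step is to align $c$ and $d$ with the spectral structure of $\mathbf E$. Diagonalise $\mathbf E = \mu_1\hat e_1\otimes\hat e_1 + \mu_2\hat e_2\otimes\hat e_2$ with $0<\mu_1\leq\mu_2$; since $\mu_1+\mu_2=\mu$, automatically $\mu_2\geq\mu/2$. I pick $d$ along the \emph{large}-eigenvalue direction $\hat e_2$, so that $\mathbf E$ best dominates $q\otimes q = 4\lambda^2\, d\otimes d$, and $c$ along $\hat e_1$. After possibly flipping the sign of $\hat e_1$ so that $a_1:=a\cdot\hat e_1\geq 0$, set $c = (a_1+\delta)\hat e_1$ for a small $\delta>0$ to be tuned, and $d = d_2\hat e_2$ with $d_2 = \sqrt{C-(a_1+\delta)^2}$, which is real and positive for small $\delta$ since $|a|<\sqrt C$. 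Writing $a_2 := a\cdot\hat e_2$, a direct computation in the basis $(\hat e_1,\hat e_2)$ gives
\[
f(t) = \begin{pmatrix} \mu_1 & 2t\lambda d_2\,\delta \\ 2t\lambda d_2\,\delta & \mu_2 - 4t\lambda d_2 a_2 - 4t^2\lambda^2 d_2^2 \end{pmatrix},
\]
and with $s := \lambda d_2$ the conditions $f(\pm 1)>0$ read $4s^2+4s|a_2|<\mu_2$ together with $\mu_1\bigl(\mu_2-4s|a_2|-4s^2\bigr) > 4s^2\delta^2$. Taking $s := \tfrac12 s_{\max}$ with $s_{\max} := \tfrac12(-|a_2|+\sqrt{a_2^2+\mu_2})$, the first inequality holds and a short manipulation gives $\mu_2-4s|a_2|-4s^2 = 3s_{\max}^2+2s_{\max}|a_2|>0$, so the second reduces to $\delta^2 < K\,\mu_1$ for an explicit $K>0$, solvable because $\mu_1>0$. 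Then
\[
\lambda|\bar b-\bar a| = 2s = s_{\max} = \frac{\mu_2}{|a_2|+\sqrt{a_2^2+\mu_2}} \geq \frac{\mu_2}{2\sqrt{2C}} \geq \frac{\mu}{4\sqrt{2C}},
\]
using $|a_2|\leq|a|<\sqrt C$, $\mu_2\leq\mu\leq C$, and $\mu_2\geq\mu/2$; so $c_0 := 1/(4\sqrt{2C})$ works.

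The main obstacle I anticipate is that the naive symmetric choice $c=a$, which has the appealing property $R\equiv 0$, \emph{requires} $d\perp a$, and if this unique perpendicular direction happens to coincide with the \emph{small}-eigenvalue direction of $\mathbf E$, then $\langle \mathbf E^{-1}d,d\rangle$ blows up and the segment length collapses to $O(\sqrt{\mu_1})$, which cannot be controlled by $\mu$ alone since $\mu_1$ may be much smaller than $\mu$ even deep inside $\mathcal U$. The asymmetric shift $c = a + \delta\hat e_1$ used above decouples the choice of wave-cone direction from the nondegeneracy constraint $\bar a\ne\pm\bar b$, and is precisely what rescues the uniform lower bound; the edge case $a=0$ is subsumed in the same formulas with $a_1=a_2=0$ and any small $\delta>0$.
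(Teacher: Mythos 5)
Your proof is essentially correct, and it is a genuinely different route from the paper's. The paper disposes of the lemma in two sentences by citing \cite[Lemma 3]{dls2} (which identifies $\mathcal{U}$ as the interior of the convex hull of the set $K_{\sqrt{C}}$) and \cite[Lemma 6]{dls2} (which produces, inside any bounded open convex set, from any interior point a sufficiently long segment with endpoints of the required form). You instead give a fully self-contained construction: parametrising the wave direction by the orthogonal pair $(c,d)$, reducing membership of the segment in $\mathcal{U}$ to positive-definiteness of the concave matrix pencil $f(t)=\mathbf E+tR-t^2\,q\otimes q$ on $[-1,1]$ (correctly reduced to the endpoints by the convexity identity), aligning $d$ with the large-eigenvalue direction of $\mathbf E$, and shifting $c$ slightly off $a$ to decouple the perpendicularity constraint from the spectral structure of $\mathbf E$. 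The diagnosis in your last paragraph of why the symmetric choice $c=a$ fails is accurate and is exactly the point that a from-scratch proof must address; your asymmetric shift handles it cleanly. What the paper's route buys is brevity and reuse of an abstract convex-geometry fact; what yours buys is an explicit, elementary, and quantitative argument that does not rely on the Krein--Milman-flavoured machinery of \cite{dls2}.

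One small algebraic slip, which does not affect the conclusion: with $s_{\max}=\tfrac12\bigl(-|a_2|+\sqrt{a_2^2+\mu_2}\bigr)$, rationalising gives
\[
s_{\max}=\frac{\mu_2}{2\bigl(|a_2|+\sqrt{a_2^2+\mu_2}\bigr)}\, ,
\]
not $\mu_2/\bigl(|a_2|+\sqrt{a_2^2+\mu_2}\bigr)$ as you wrote. This halves your final constant to $c_0=1/(8\sqrt{2C})$ rather than $1/(4\sqrt{2C})$, but since $c_0$ is only required to depend on $C$ the lemma is still obtained. You may also wish to say explicitly that $\mu=\tr\mathbf E>0$ follows by taking the trace of $a\otimes a-A<\tfrac{C}{2}\id$ and using $\tr A=0$, as this is what makes $\mu_1>0$ and hence the final choice of $\delta$ possible; you use it implicitly but it is worth a sentence.
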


We are now ready to prove (Cl). 
Let $(\underline{v}, \underline{u})\in X_0$. Consider any point $(x_0, t_0)\in \Gamma$ and observe that $(\tilde{v}, \tilde{u}) + 
(\underline{v}, \underline{u})$ takes values in $\mathcal{U}$. Let therefore $\sigma$ be as 
in Lemma \ref{l:geometric} when 
$(a, A)= (\tilde{v}, \tilde{u}) + (\underline{v} (x_0, t_0), \underline{u} (x_0, t_0))$
and choose $r>0$ so that $(\tilde{v}, \tilde{u}) + (\underline{v} (x,t), \underline{u} (x,t))
+ \sigma \subset \mathcal{U}$ for any $(x,t)\in B_r (x_0) \times ]t_0-r, t_0+r[$. For any $\eps>0$
consider a pair $(v,u)$ as in Proposition \ref{p:local_waves} and define $(v_{x_0, t_0, r}, u_{x_0, t_0, r})(x,t) :=
(v,u) \left(\frac{x-x_0}{r}, \frac{t-t_0}{r}\right)$. Observe that $(\underline{v}, \underline{u}) +
(v_{x_0, t_0, r}, u_{x_0, t_0, r})\in X_0$ provided $\varepsilon$ is sufficiently small, and moreover 
\begin{equation}\label{eq:rescaled_bound}
\int |v_{x_0, t_0, r}| \geq c_0\alpha\lambda (C- |\tilde{v} + \underline{v} (x_0, t_0)|^2) r^3\, .
\end{equation}

By continuity there exists $r_0$ such that the conclusion above holds for every $r<r_0$ and every
$(x,t)$ with $B_r (x)\times ]t-r, t+r[\subset \Gamma$.
Fix now $k\in \N$ with $\frac{1}{k}< r_0$. Set $r := \frac{1}{k}$ and find a finite number of points
$(x_j, t_j)$ such that the sets $B_r (x_j)\times ]t_j -r, t_j +r[$ are pairwise disjoint, contained
in $\Gamma$ and satisfy 
\begin{equation}\label{eq:Riemann_sum}
\sum_j \left(C- |\tilde{v} + \underline{v} (x_j, t_j)|^2\right) r^3 \geq \bar{c} 
\left(C |\Gamma| - \int_\Gamma |\tilde{v} + \underline{v} (x,t)|^2\, dx\, dt\right)\, ,
\end{equation}
where $\bar{c}$ is a suitable geometric constant. We then define
\[
(v_k, u_k):= (\underline{v}, \underline{u}) + \sum_j (v_{x_j, t_j, r}, u_{x_j, t_j, r}) \, .
\]
Since the supports of the $(v_{x_j, t_j, r}, u_{x_j, t_j, r})$ are pairwise disjoint, 
$(v_k, u_k)$ belongs to $X_0$ as well. Moreover, using the property that 
$\int (v_{x_j, t_j, r}, u_{x_j, t_j, r}) = 0$, it is immediate to check that $(v_k, u_k) \rightharpoonup^\star 
(\underline{v}, \underline{u})$ in $L^\infty$. On the other hand it also follows from
\eqref{eq:rescaled_bound} and \eqref{eq:Riemann_sum} that
\[
\|v_k -\underline{v}\|_{L^1 (\Gamma)} \geq c_1 \left(C |\Gamma| - \int_\Gamma |\tilde{v} + \underline{v}|^2\right)
\]
where the constant $c_1$ is only geometric. Using the weak$^\star$ convergence of $(v_k, u_k)$ 
to $(\underline{v}, \underline{u})$ we can
then conclude
\begin{align*}
\liminf_k \|\tilde{v} + v_k\|^2_{L^2 (\Gamma)} &= \|\tilde{v}+ \underline{v}\|_{L^2 (\Gamma)}^2 
+ \liminf_k \|v_k - \underline{v}\|^2\\  
&\geq \|\tilde{v}+ \underline{v}\|_{L^2 (\Gamma)}^2 
+ |\Gamma| \left(\liminf_k \|v_k-\underline{v}\|_{L^1}\right)^2\nonumber\\
&\geq \|\tilde{v}+ \underline{v}\|_{L^2 (\Gamma)}^2 + c_1^2 \|\Gamma\|
\left(C |\Gamma| - \int_\Gamma |\tilde{v} + \underline{v}|^2\right)^2\, ,
\end{align*}
which concludes the proof of the claim (Cl).

\subsection{Proof of Proposition \ref{p:local_waves} and of Lemma \ref{l:geometric}}

\begin{proof}[Proof of Proposition \ref{p:local_waves}]
Consider the $3\times 3$ matrices
\[
U_a =
\left(
\begin{array}{ll}
a \otimes a & a\\
a & 0 
\end{array}
\right) \qquad \mbox{and} \qquad
U_b =
\left(
\begin{array}{ll}
b \otimes b & b\\
b & 0 
\end{array}\right)
\]
Apply \cite[Proposition 4]{dls2} with $n=2$ to $U_a$ and $U_b$ and let $A (\partial)$ be the corresponding 
linear differential
operator and $\eta\in \R^2_x\times \R_t$ the corresponding vector. Let $\varphi$ be a cut-off function
which is identically equal to $1$ in $B_{1/2} (0)\times ]-\frac{1}{2}, \frac{1}{2}[$, 
is compactly supported in $B_1 (0)\times ]-1,1[$ and takes values in $[-1,1]$. For $N$ very large, whose choice will
be specified later, we consider the function
\[
 \phi (x,t) = - \lambda N^{-3} \sin (N \eta \cdot (x,t)) \varphi (x,t) =: \kappa (x,t) \varphi (x,t)\, 
\]
and we let $U (x,t) := A (\partial) (\phi)$. According to \cite[Proposition 4]{dls2}, $U: \R^2\times \R\to
\Sym^{3\times 3}$ is divergence free and trace-free and moreover $U_{33}=0$. Note also that $\int_{B_1 (0)\times ]-1,1[} U(x,t) = 0$. Define
\[
v (x,t) := (U_{31} (x,t), U_{32} (x,t))
\qquad
u (x,t) :=
\left(
\begin{array}{ll}
U_{11} (x,t) & U_{12} (x,t)\\
U_{21} (x,t) & U_{22} (x,t)
\end{array}\right)\, .
\]
It then follows easily that $(v,u)$ satisfies \eqref{eq:linear_PDE} and that it is supported in
$B_1 (0)\times ]-1,1[$. Also, since $A (\partial)$ is a 3rd order homogeneous linear differential
operator with constant coefficients, 
$\|U - \varphi A (\partial) (\kappa)\|_0 \leq C \lambda N^{-1}$, where $C$ depends only on the cut-off
function $\varphi$: in particular we can assume that $\|U - \varphi A (\partial) (\kappa)\|_0 < \varepsilon$.
On the other hand \cite[Proposition 4]{dls2} clearly implies that
\[
\varphi A (\partial) (\kappa) = \lambda (U_a- U_b) \varphi \cos (N (x,t)\cdot \eta)\, .
\]
we therefore conclude that $U$ takes values in an $\eps$-neighborhood of the segment $[-\lambda (U_a - U_b),
\lambda (U_a-U_b)]$. This obviously implies that 
$(v,u)$ takes values in an $\eps$-neighborhood of the segment $\sigma$. Finally, Let $B^3_{1/2}$ be the 
$3$-dimensional space-time ball in $\R^2 \times \R$, centered at $0$ and with radius $\frac{1}{2}$. Observe that
\begin{align*}
\int |v (x,t)| &\geq \int_{B^3_{1/2}} \lambda |a-b| |\cos (N (x,t) \cdot \eta)|\, dx\, dt
= \lambda |a-b| \int_{B^3_{1/2}} |\cos (N t |\eta|)|\, dx\, dt\, .  
\end{align*}
Moreover,
\[
\lim_{N\uparrow \infty} \int_{B^3_{1/2}} |\cos (N t |\eta|)|\, dx\, dt = \bar\alpha
\]
for some positive geometric constant $\bar \alpha$. 
\end{proof}

\begin{proof}[Proof of Lemma \ref{l:geometric}]
Consider the set 
\[
K_{\sqrt{C}}:= \left\{ (v,u)\in \R^2\times \Sym^{2\times 2}_0 : u = v\otimes v - \frac{C}{2} {\rm Id}\, ,
|v|^2 = C\right\}\, . 
\]
It then follows from \cite[Lemma 3]{dls2} that $\mathcal{U}$ is the interior of 
the convex hull of $K_{\sqrt{C}}$.
The existence of the claimed segment $\sigma$ is then a corollary of \cite[Lemma 6]{dls2},
since $\lambda |b-a|$ is indeed comparable (up to a geometric constant) to the length of $\sigma$. 
\end{proof}

\section{A set of algebraic identities and inequalities}\label{s:algebra}

In this paper we actually look at fan subsolutions with a fan partition consisting of only three
sets, namely $P_-, P_1$ and $P_+$. 

We introduce therefore the real numbers 
$\alpha, \beta, \gamma, \delta, v_{-1}, v_{-2}, v_{+1}, v_{+2}$ such hat
\begin{align} 
v_1 &= (\alpha, \beta),\label{eq:v1}\\
v_- &= (v_{-1}, v_{-2})\\
v_+ &= (v_{+1}, v_{+2})\\
u_1 &=\left( \begin{array}{cc}
    \gamma & \delta \\
    \delta & -\gamma\\
    \end{array} \right)\, .\label{eq:u1}
\end{align}

\begin{proposition}\label{p:algebra}
Let $N=1$ and $P_-, P_1, P_+$ be a fan partition as in Definition \ref{d:fan}. The constants 
$v_1, v_-, v_+, u_1, \rho_-, \rho_+, \rho_1$ as in \eqref{eq:v1}-\eqref{eq:u1} define an admissible
fan subsolution as in Definitions \ref{d:subs}-\ref{d:admiss} if and only if the following
identities and inequalities hold:
\begin{itemize}
\item Rankine-Hugoniot conditions on the left interface:
\begin{align}
&\nu_- (\rho_- - \rho_1) \, =\,  \rho_- v_{-2} -\rho_1  \beta \label{eq:cont_left}  \\
&\nu_- (\rho_- v_{-1}- \rho_1 \alpha) \, = \, \rho_- v_{-1} v_{-2}- \rho_1 \delta  \label{eq:mom_1_left}\\
&\nu_- (\rho_- v_{-2}- \rho_1 \beta) \, = \,  
\rho_- v_{-2}^2 + \rho_1 \gamma +p (\rho_-)-p (\rho_1) - \rho_1 \frac{C_1}{2}\, ;\label{eq:mom_2_left}
\end{align}
\item Rankine-Hugoniot conditions on the right interface:
\begin{align}
&\nu_+ (\rho_1-\rho_+ ) \, =\,  \rho_1  \beta - \rho_+ v_{+2} \label{eq:cont_right}\\
&\nu_+ (\rho_1 \alpha- \rho_+ v_{+1}) \, = \, \rho_1 \delta - \rho_+ v_{+1} v_{+2} \label{eq:mom_1_right}\\
&\nu_+ (\rho_1 \beta- \rho_+ v_{+2}) \, = \, - \rho_1 \gamma - \rho_+ v_{+2}^2 +p (\rho_1) -p (\rho_+) 
+ \rho_1 \frac{C_1}{2}\, ;\label{eq:mom_2_right}
\end{align}
\item Subsolution condition:
\begin{align}
 &\alpha^2 +\beta^2 < C_1 \label{eq:sub_trace}\\
& \left( \frac{C_1}{2} -{\alpha}^2 +\gamma \right) \left( \frac{C_1}{2} -{\beta}^2 -\gamma \right) - 
\left( \delta - \alpha \beta \right)^2 >0\, ;\label{eq:sub_det}
\end{align}
\item Admissibility condition on the left interface:
\begin{align}
& \nu_-(\rho_- \varepsilon(\rho_-)- \rho_1 \varepsilon( \rho_1))+\nu_- 
\left(\rho_- \frac{\abs{v_-}^2}{2}- \rho_1 \frac{C_1}{2}\right)\nonumber\\
\leq & \left[(\rho_- \varepsilon(\rho_-)+ p(\rho_-)) v_{-2}- 
( \rho_1 \varepsilon( \rho_1)+ p(\rho_1)) \beta \right] 
+ \left( \rho_- v_{-2} \frac{\abs{v_-}^2}{2}- \rho_1 \beta \frac{C_1}{2}\right)\, ;\label{eq:E_left}
\end{align}
\item Admissibility condition on the right interface:
\begin{align}
&\nu_+(\rho_1 \varepsilon( \rho_1)- \rho_+ \varepsilon(\rho_+))+\nu_+ 
\left( \rho_1 \frac{C_1}{2}- \rho_+ \frac{\abs{v_+}^2}{2}\right)\nonumber\\
\leq &\left[ ( \rho_1 \varepsilon( \rho_1)+ p(\rho_1)) \beta- (\rho_+ \varepsilon(\rho_+)+ p(\rho_+)) v_{+2}\right] 
+ \left( \rho_1 \beta \frac{C_1}{2}- \rho_+ v_{+2} \frac{\abs{v_+}^2}{2}\right)\, .\label{eq:E_right}
\end{align}
\end{itemize}
\end{proposition}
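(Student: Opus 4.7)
The strategy is a direct translation of the distributional conditions in Definitions \ref{d:subs} and \ref{d:admiss} into algebraic statements, exploiting the fact that everything is piecewise constant on a fan partition that consists of only three wedges $P_-, P_1, P_+$ separated by the two rays $\{x_2 = \nu_\pm t\}$.

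First, I would deal with the pointwise subsolution constraint \eqref{eq:subsolution 2}. Plugging in the explicit forms of $v_1$ and $u_1$ from \eqref{eq:v1}--\eqref{eq:u1}, the matrix $\frac{C_1}{2}\id - v_1\otimes v_1 + u_1$ equals
\[
\begin{pmatrix} \tfrac{C_1}{2}-\alpha^2+\gamma & -(\alpha\beta-\delta) \\ -(\alpha\beta-\delta) & \tfrac{C_1}{2}-\beta^2-\gamma \end{pmatrix},
\]
so positive definiteness is equivalent to positive trace and positive determinant, which give exactly \eqref{eq:sub_trace} and \eqref{eq:sub_det}.

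Next, I would process the PDEs \eqref{eq:continuity}--\eqref{eq:momentum}. Since $(\overline{\rho},\overline{v},\overline{u})$ are constant in each $P_\pm, P_1$, the distributional derivatives are supported on the two interfaces. For a quantity $F$ jumping from value $F_-$ to $F_1$ across $\{x_2=\nu_- t\}$ and from $F_1$ to $F_+$ across $\{x_2=\nu_+ t\}$, the distribution $\partial_t F + \partial_{x_2} G$ vanishes iff the standard Rankine-Hugoniot conditions $\nu_-(F_1-F_-)=G_1-G_-$ and $\nu_+(F_+-F_1)=G_+-G_1$ hold. Applied to \eqref{eq:continuity} (with $F=\overline{\rho}$, $G=\overline{\rho}\,\overline{v}_2$) this yields \eqref{eq:cont_left} and \eqref{eq:cont_right}. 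For the momentum equation, the first component has no contribution from $\nabla_x p$ since the pressure-like term depends only on $(x_2,t)$; taking $F=\overline{\rho}\,\overline{v}_1$ and $G=(\overline{\rho}\,\overline{u})_{12}$ gives \eqref{eq:mom_1_left}, \eqref{eq:mom_1_right}. For the second component one must include the full ``generalized pressure'' $p(\overline\rho)+\tfrac12(\sum_i C_i\rho_i\bm{1}_{P_i}+\overline\rho|\overline v|^2\bm{1}_{P_\pm})$; the key bookkeeping is that in $P_\pm$ the $|v_\pm|^2/2$ piece combines with $(\overline\rho\overline u)_{22}=\rho_\pm(v_{\pm 2}^2-\tfrac{|v_\pm|^2}{2})$ to produce $\rho_\pm v_{\pm 2}^2$, while in $P_1$ one has the clean contribution $-\rho_1\gamma + p(\rho_1)+\rho_1 C_1/2$. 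Writing out the resulting jump identities produces \eqref{eq:mom_2_left} and \eqref{eq:mom_2_right}.

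Finally, the admissibility inequality \eqref{eq:admissible subsolution} is again a distribution supported on the two interfaces; since the outward normal in the $(x_2,t)$ plane to $\{x_2=\nu_\pm t\}$ points in the direction $(1,-\nu_\pm)$ from the inner region to the outer one, the distribution is $\leq 0$ iff on each interface the jump of the spatial flux minus $\nu_\pm$ times the jump of the temporal flux is non-positive in the appropriate direction. Collecting the temporal flux $\overline\rho\,\eps(\overline\rho)+\tfrac12\rho_iC_i\bm{1}_{P_i}+\tfrac12\overline\rho|\overline v|^2\bm{1}_{P_\pm}$ and the $x_2$-flux $(\overline\rho\,\eps(\overline\rho)+p(\overline\rho))\overline v_2$ augmented by the $P_i$ and $P_\pm$ kinetic terms multiplied by $\overline v_2$, this translates into exactly \eqref{eq:E_left} and \eqref{eq:E_right}. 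I would then note that the converse direction is automatic: assuming all the listed algebraic relations, reading the computations backwards produces distributions that vanish (or have the correct sign) globally.

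The only mildly delicate point is keeping track of the bookkeeping for the generalized pressure and the temporal energy density, since these are defined piecewise with different formulas inside and outside the wedge $P_1$; everything else is routine, and I do not expect any genuine obstacle beyond care with signs and with the orientation of the two interface normals.
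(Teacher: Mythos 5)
Your proposal is correct and follows exactly the route the paper takes: reduce the distributional PDEs and the admissibility inequality to Rankine--Hugoniot-type jump relations across the two half-planes $\{x_2=\nu_\pm t\}$ (after observing that everything is independent of $x_1$), and reduce the open subsolution constraint \eqref{eq:subsolution 2} to positive-definiteness of the $2\times 2$ symmetric matrix via its trace and determinant. One small exposition slip worth flagging: for the left interface $\{x_2=\nu_- t\}$ the vector $(1,-\nu_-)$ in $(x_2,t)$ coordinates actually points from $P_-$ into $P_1$, i.e.\ from the \emph{outer} region to the \emph{inner} one (only at the right interface does $(1,-\nu_+)$ point outward); this does not affect your final inequalities \eqref{eq:E_left}--\eqref{eq:E_right}, which come out with the correct signs.
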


\begin{figure}[htbp]
\begin{center}
\input 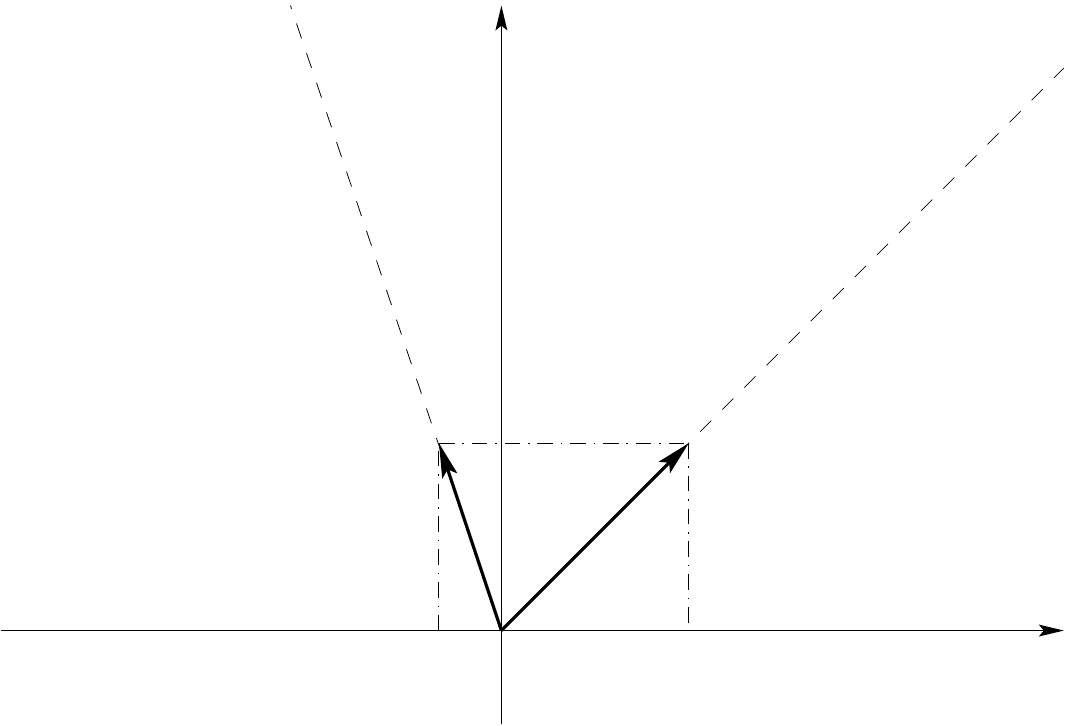_t
\caption{The fan partition in three regions.}
\label{fig:subsolution}
\end{center}
\end{figure}

\begin{proof} Observe that the triple $(\overline{\rho}, \overline{v}, \overline{u})$ does not
depend on the variable $x_1$. We will therefore consider it as a map defined on the $t, x_2$ plane.
The various conditions and inequalities follow from straightforward computations,
recalling that the maps $\overline{\rho}, \overline{v}$ and $\overline{u}$ are constant in the regions
$P_-$, $P_1$ and $P_+$ shown in Figure \ref{fig:subsolution}. In particular
\begin{itemize}
\item The identities \eqref{eq:cont_left} and \eqref{eq:cont_right} are equivalent to 
the continuity equation \eqref{eq:continuity}, in particular they derive from the 
corresponding ``Rankine-Hugoniot'' type conditions at the interfaces between $P^-$ and $P_1$ 
(the {\em left interface}) and $P_1$ and $P_+$ (the {\em right interface}), respectively.
\item The identitities \eqref{eq:mom_1_left} and \eqref{eq:mom_1_right} are the Rankine-Hugoniot
conditions at the left and right interfaces resulting from the first component of the momentum
equation \eqref{eq:momentum}; similarly \eqref{eq:mom_2_left} and \eqref{eq:mom_2_right}
correspond to the Rankine-Hugoniot conditions at the left and right interfaces for the second
component of the momentum equation \eqref{eq:momentum}.
\item The inequalities \eqref{eq:sub_trace} and \eqref{eq:sub_det} are derived applying the
usual criterion that the matrix 
\begin{equation}
M:= \frac{C_1}{2} \id - v_1\otimes v_1 + u_1
\end{equation}
is positive definite if and only if $\tr M$ and $\det M$ are both positive.
\item Finally, the conditions \eqref{eq:E_left} and \eqref{eq:E_right} derive from the admissibility
condition \eqref{eq:admissible subsolution}, again considering, respectively, the corresponding
inequalities at the left and right interfaces. 
\end{itemize}
\end{proof}

\section{First method: data generated by compression waves for $p (\rho) = \rho^2$}\label{s:rho^2_1}

In this section we show how to find solutions of the algebraic constraints in Proposition \ref{p:algebra}
when $p (\rho) = \rho^2$ with pairs $(\rho_\pm, v_\pm)$ which can be connected by a compression wave,
thereby showing Theorem \ref{t:main}. We start by recalling the following fact, which can be easily derived
using (by now) standard theory of hyperbolic conservation laws in one space dimension. 

\begin{lemma}\label{l:comp_wave}
Let $0<\rho_-<\rho_+$, $v_+ = (-\frac{1}{\rho_+}, 0)$ and $v_- = (- \frac{1}{\rho_+}, 2 \sqrt{2} (\sqrt{\rho_+}
- \sqrt{\rho_-}))$. Then there is a pair $(\rho, v)\in W^{1,\infty}_{loc} \cap L^\infty 
(\R^2 \times ]-\infty, 0[, \R^+\times \R^2)$ such that
\begin{itemize}
 \item[(i)] $\rho_+ \geq \rho \geq \rho_- >0$;
\item[(ii)] The pair solves the hyperbolic system
\begin{equation}\label{eq:Euler_2}
\left\{\begin{array}{l}
\partial_t \rho + {\rm div}_x (\rho v) \;=\; 0\\
\partial_t (\rho v) + {\rm div}_x \left(\rho v\otimes v \right) + \nabla_x [ p(\rho)]\;=\; 0
\end{array}\right.
\end{equation}
with $p(\rho) = \rho^2$ in the classical sense (pointwise a.e. and distributionally);
\item[(iii)] for $t\uparrow 0$ the pair $(\rho (\cdot, t), v (\cdot , t))$ converges pointwise a.e. to 
$(\rho^0, v^0)$ as in \eqref{eq:R_data};
\item[(iv)] $(\rho (\cdot ,t), v (\cdot , t))\in W^{1,\infty}$ for every $t<0$.
\end{itemize}
  \end{lemma}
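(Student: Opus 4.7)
\medskip
\noindent\textbf{Plan of proof.} The plan is to realise the compression wave as an explicit self-similar $1$-rarefaction profile of the $1$D isentropic Euler system, read backwards in time. Since the data depend only on $x_2$ and the first velocity components coincide, $v_{1,\pm}=-1/\rho_+$, I would look for a solution with $v_1\equiv -1/\rho_+$ and $(\rho,v_2)$ depending only on $(x_2,t)$. Under this ansatz the continuity and first-momentum equations combine to give the linear transport equation $\dt v_1 + v_2\,\dx v_1 = 0$, which is automatically satisfied, so \eqref{eq:Euler_2} reduces to the $1$D system for $(\rho,u):=(\rho,v_2)$ with $p(\rho)=\rho^2$.

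For this $1$D system the first characteristic speed and $1$-Riemann invariant are
\[
\lambda_1(\rho,u)=u-\sqrt{2\rho}, \qquad w_1(\rho,u)=u+2\sqrt{2}\sqrt{\rho}.
\]
A direct calculation using the explicit form of $v_\pm$ gives $w_1(\rho_-,v_{-,2})=w_1(\rho_+,0)=2\sqrt{2}\sqrt{\rho_+}$, while $\lambda_1(\rho_+,0)=-\sqrt{2\rho_+} < 2\sqrt{2}\sqrt{\rho_+}-3\sqrt{2}\sqrt{\rho_-}=\lambda_1(\rho_-,v_{-,2})$ because $\rho_-<\rho_+$. Hence the two end-states lie on a common $1$-integral curve and are correctly ordered to be joined by a $1$-rarefaction fan. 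Solving $w_1(R,V)=2\sqrt{2}\sqrt{\rho_+}$ simultaneously with $\lambda_1(R,V)=\xi$ produces the explicit Lipschitz profile
\[
R(\xi)=\frac{1}{18}\bigl(2\sqrt{2}\sqrt{\rho_+}-\xi\bigr)^2, \qquad V(\xi)=\frac{1}{3}\bigl(2\sqrt{2}\sqrt{\rho_+}+2\xi\bigr),
\]
on $[\xi_1,\xi_2]:=[\lambda_1(\rho_+,0),\lambda_1(\rho_-,v_{-,2})]$, which I would extend by the constants $(\rho_+,0)$ for $\xi<\xi_1$ and $(\rho_-,v_{-,2})$ for $\xi>\xi_2$.

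I would then set $(\rho,v_2)(x_2,t):=(R(x_2/t),V(x_2/t))$ and $v_1\equiv-1/\rho_+$ on $\R^2\times\,]-\infty,0[$. Conclusion (iv) follows from the chain rule $\dx(\rho,v_2)=(R',V')(x_2/t)/t$, and local Lipschitz regularity on $\{t\leq-\delta\}$ for any $\delta>0$ follows analogously. The system \eqref{eq:Euler_2} holds classically in each of the three open self-similar regions (by construction as a $1$-simple wave inside the fan, trivially in the two constant regions), and Lipschitz matching across the two straight interfaces $\{x_2=\xi_i t\}$ promotes this to a distributional and hence pointwise a.e.\ solution, proving (ii). The density bound (i) follows from the monotonicity of $R$ on $[\xi_1,\xi_2]$, and the limit (iii) is immediate from $\xi=x_2/t\to\mp\infty$ as $t\uparrow 0^-$ with $\mathrm{sign}(x_2)=\pm1$, which places the profile in the appropriate constant region.

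The main point requiring care is a sign inversion in the geometry: in the standard $1$-rarefaction picture on $t>0$ the low-speed end of the fan lies on the left, but here, since $t<0$, the limit $\xi\to-\infty$ corresponds to $x_2>0$, so the low-speed constant state of the profile must be matched to the right state $(\rho_+,0)$ of \eqref{eq:R_data}, not to the left state. Once this orientation is kept straight, every step reduces to classical self-similar rarefaction theory and the only real input from the hypotheses on $(\rho_\pm,v_\pm)$ is the Riemann-invariant identity $v_{-,2}+2\sqrt{2}\sqrt{\rho_-}=2\sqrt{2}\sqrt{\rho_+}$, which is built into the specific choice of $v_-$.
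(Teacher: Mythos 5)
Your proposal is correct and follows essentially the same route as the paper: realize the compression wave as the $1$-rarefaction fan of the reduced $1$D isentropic system, singled out by the constancy of the Riemann invariant $v_2+2\sqrt{2}\sqrt{\rho}$, and observe that $v_1\equiv-1/\rho_+$ is propagated trivially. The only difference is cosmetic: the paper first applies the explicit time-reversal $(\tilde\rho,\tilde m)(x_2,t)=(\rho,m)(-x_2,-t)$ so as to reduce to a \emph{forward} Riemann problem and then invokes Dafermos' Theorems 7.6.5--7.6.6, whereas you work directly on $t<0$, keep track of the orientation flip $\xi=x_2/t\to\mp\infty$ by hand, and write the profile $R(\xi)=\tfrac{1}{18}(2\sqrt{2}\sqrt{\rho_+}-\xi)^2$, $V(\xi)=\tfrac{1}{3}(2\sqrt{2}\sqrt{\rho_+}+2\xi)$ explicitly; all your algebra checks out (including the endpoint matching $R(\xi_1)=\rho_+$, $V(\xi_1)=0$, $R(\xi_2)=\rho_-$, $V(\xi_2)=v_{-,2}$ and the monotonicity giving (i)). Your explicit version is more self-contained; the paper's time-reversal is perhaps slightly cleaner for invoking off-the-shelf theory, but the two arguments are interchangeable.
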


As already mentioned, the proof is a very standard application of the one-dimensional theory 
for the so-called Riemann problem. However, we give the details for the reader's convenience.

\begin{proof} We look for solutions $(\rho, v)$ with the claimed properties which are independent of the $x_1$
variable. Moreover we observe that, since we will produce classical $W^{1, \infty}_{loc}$ solutions,
the admissibility condition \eqref{eq:admissibility condition}
will be automatically satisfied as an equality because 
\[
\left(\rho \eps (\rho) + \frac{|v|^2}{2} \rho,
\left(\rho \eps (\rho) + \frac{|v|^2}{2} \rho + p (\rho)\right) v\right)
\] 
is an entropy-entropy flux pair for the
system \eqref{eq:Euler_2} (cf. \cite[Sections 3.2, 3.3.6, 4.1]{da}). We then introduce 
the unknowns 
\[
(m_1 (x_2, t), m_2 (x_2, t)) = m (x_2, t) := v (x_2, t) \rho (x_2, t)
\] 
and hence rewrite the system as 
\begin{equation}\label{eq:explicit system}
\left\{\begin{array}{l}
\partial_t \rho + \dx m_2 \;=\; 0\\
\partial_t m_1 + \dx \left(\frac{m_1m_2 }{\rho}\right)\;=\; 0\\
\partial_t m_2 + \dx \left(\frac{{m_2}^2}{\rho}+\rho^2\right)\;=\; 0
\end{array}\right.
\end{equation}
Observe that if $(\rho, m)$ is a solution of \eqref{eq:explicit system} then so is
\[
(\tilde{\rho} (x_2, t), \tilde{m} (x_2, t)) := (\rho (-x_2, -t), m (-x_2, -t))\, .
\] 
Moreover,
if $(\rho, m)$ is locally Lipschitz and hence satisfies the admissibility condition with {\em equality},
so does $(\tilde{\rho}, \tilde{m})$. We have therefore reduced ourselves to finding classical
$W^{1,\infty}_{loc}$ solutions on $\R\times ]0, \infty[$ of \eqref{eq:explicit system} with initial data
\begin{equation}\label{eq:initial density new}
\rho_0(x):= 
\begin{cases} \rho_R & \text{if $x_2>0$,}
\\
\rho_L &\text{if $x_2< 0$,}
\end{cases}
\end{equation}
and 
\begin{equation}\label{eq:initial momentum}
m_0(x):= 
\begin{cases} m_R:=\left(-\frac{\rho_R}{\rho_L},2\sqrt{2} \rho_R(\sqrt{\rho_L}-\sqrt{\rho_R})\right) & \text{if $x_2>0$,}
\\
m_L:=\left(-1,0\right) &\text{if $x_2< 0$,}
\end{cases}
\end{equation}
where $\rho_+ = \rho_L > \rho_R = \rho_- >0$, $m_L = v_+ \rho_+$ and $m_R = v_-\rho_-$. 

The problem amounts now in showing that, under our assumptions, there is a classical rarefaction wave
solving, forward in time, the system \eqref{eq:explicit system} with initial data $(\rho_0, m_0)$ as
in \eqref{eq:initial density new} and \eqref{eq:initial momentum}.
We set therefore $p (\rho) = \rho^2$ and we look for a locally Lipschitz self-similar solution 
$(\rho,m)$ to the Riemann problem 
\eqref{eq:explicit system}-\eqref{eq:initial density new}-\eqref{eq:initial momentum}:  
\begin{equation} \label{eq:self-similar solution}
 (\rho, m)(x_2,t)= (R,M)\left(\frac{x_2}{t}\right), \quad -\infty <x_2<\infty, \quad 0<t<\infty\, .
\end{equation}
Thus $(R,M)$ are locally Lipschitz functions on $(- \infty, \infty)$
which satisfy the ordinary differential equations
\begin{align*}
&\frac{d}{d\xi} \left[M_2(\xi)-\xi R(\xi)\right]+ R(\xi)=0\\
&\frac{d}{d\xi} \left[ \frac{M_1(\xi)M_2(\xi)}{R(\xi)}- \xi M_1(\xi)\right] +M_1(\xi)=0 \\
&\frac{d}{d\xi} \left[ \frac{M_2(\xi)^2}{R(\xi)}+p(R(\xi))- \xi M_2(\xi)\right] +M_2(\xi)=0\, . 
\end{align*}
Before analyzing our specific Riemann problem, we review some general notions for system 
\eqref{eq:explicit system} (referring the reader to the monographs \cite{da} and \cite{se}).
If we introduce the state vector $U:= (\rho, m_1, m_2)$, we can recast the system \eqref{eq:explicit system} in the general form
$$\dt U + \dx F(U)=0,$$
where 
$$
F(U):=\left( \begin{array}{c}
    m_2 \\
    \frac{m_1 m_2}{\rho}\\
    \frac{{m_2}^2}{\rho}+p(\rho)\\
    \end{array} \right).
$$
By definition (cf. \cite{da}) the system \eqref{eq:explicit system} is hyperbolic since the Jacobian matrix $DF(U)$
$$
DF(U)=\left( \begin{array}{ccc}
    0 &0 &1 \\
    \frac{-m_1 m_2}{\rho^2} &\frac{m_2}{\rho} &\frac{m_1}{\rho}\\
    \frac{-{m_2}^2}{\rho^2}+p'(\rho) &0 &\frac{2m_2}{\rho}\\
    \end{array} \right)
$$
has real eigenvalues 
\begin{equation} \label{eq:eigenvalues}
 \lambda_1= \frac{m_2}{\rho}-\sqrt{p'(\rho)}, \qquad \lambda_2=\frac{m_2}{\rho}, \qquad \lambda_3= \frac{m_2}{\rho}+\sqrt{p'(\rho)}
\end{equation}
and $3$ linearly independent eigenvectors 
\begin{equation} \label{eq:eigenvectors}
 R_1=\left( \begin{array}{c}
    1\\
    \frac{m_1}{\rho}\\
    \frac{m_2}{\rho}-\sqrt{p'(\rho)}\\
    \end{array} \right), \quad 
R_2=\left( \begin{array}{c}
    0\\
    1\\
    0\\
    \end{array} \right), \quad 
 R_3=\left( \begin{array}{c}
    1\\
    \frac{m_1}{\rho}\\
    \frac{m_2}{\rho}+\sqrt{p'(\rho)}\\
    \end{array} \right).
\end{equation}
The eigenvalue $\lambda_i$ of $DF$, $i=1,2,3$, is called the $i$-\textit{characteristic speed} of the system \eqref{eq:explicit system}.
On the part of the state space of our interest, with $\rho>0$, the system \eqref{eq:explicit system} is indeed strictly hyperbolic.
Finally, one can easily verify that the functions
\begin{equation}\label{eq:invariants}
w_3=\frac{m_2}{\rho} +\int_0^\rho \frac{\sqrt{p'(\tau)}}{\tau} d\tau, \qquad w_2=\frac{m_1}{\rho}, 
\qquad w_1=\frac{m_2}{\rho} -\int_0^\rho \frac{\sqrt{p'(\tau)}}{\tau} d\tau
\end{equation}
are, respectively, ($1$- and $2$-), ($1$- and $3$-), ($2$- and $3$-) Riemann invariants of the system \eqref{eq:explicit system} (for the relevant definitions see \cite{da}).

In order to characterize rarefaction waves of the reduced system 
\eqref{eq:explicit system}, we can refer to Theorem $7.6.6$ from \cite{da}: every $i$- Riemann invariant is constant 
along any $i$- rarefaction wave curve of the system \eqref{eq:explicit system} and conversely 
the $i$- rarefaction wave curve, through a state $(\overline{\rho}, \overline{m})$ of genuine nonlinearity of the $i$- characteristic family,
is determined implicitly by the system of equations $w_i(\rho, m)=w_i(\overline{\rho}, \overline{m})$ for every $i$- Riemann invariant $w_i$.
As an application of this theorem, we obtain that $(\rho_R,m_R)$ lies on the $1$- rarefaction wave through $(\rho_L, m_L)$.
Indeed, the $1$- rarefaction wave of the system \eqref{eq:explicit system} through the point $(\rho_L,m_L)$ is
 determined in terms of the Riemann invariants $w_3$ and $w_2$ by the equations
\begin{equation} \label{eq:rarefaction waves}
 m_1= -\frac{\rho}{\rho_L}, \qquad m_2=\rho \int_{\rho}^{\rho_L} \frac{\sqrt{p'(\tau)}}{\tau} d\tau,
\end{equation}
with $\rho<\rho_L$. In the case of pressure law $p(\rho)=\rho^2$, the 
equations \eqref{eq:rarefaction waves} read as
\begin{equation} \label{eq:rarefaction waves'}
 m_1= -\frac{\rho}{\rho_L}, \qquad m_2= 2\sqrt{2} \rho\left(\sqrt{\rho_L}-\sqrt{\rho}\right).
\end{equation}
Clearly, the constant state $(\rho_R, m_R)$, as defined by \eqref{eq:initial density new}--\eqref{eq:initial 
momentum}, satisfies the equations \eqref{eq:rarefaction waves'}.
Since, according to Theorem 7.6.5 in \cite{da}, there exists a unique $1$-rarefaction wave 
through $(\rho_L, m_L)$, we have shown the existence of our desired self-similar locally Lipschitz solution.

Observe that, by construction, $\rho_+ = \rho_L \geq \rho \geq \rho_R = \rho_- > 0$, thereby showing (i).
The claim (iv) follows easily because there exists a constant $C>0$ such that, 
for every positive time $t$, the pair $(\rho, m)$ takes the constant value $(\rho_R, m_R)$ for $x_2 \geq C t$
and $(\rho_L, m_L)$ for $x_2 \leq -C t$. 
\end{proof}

We next show the existence of a solution of the algebraic constraints of Proposition \ref{p:algebra}
such that in addition $(\rho_\pm, v_\pm)$ satisfy the identities of Lemma \ref{l:comp_wave}.

\begin{lemma}\label{l:exp_sol}
Let $p (\rho ) = \rho^2$. There exist $\rho_\pm, v_\pm$ satisfying the assumptions of Lemma
\ref{l:comp_wave} and $\rho_1, C_1, v_1, u_1, \nu_\pm$ satisfying the algebraic identities and inequalities
\eqref{eq:cont_left}-\eqref{eq:E_right}.  
\end{lemma}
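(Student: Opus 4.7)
The plan is to exploit the symmetry $v_{-1} = v_{+1} = -1/\rho_+$ and $v_{+2} = 0$ of the compression wave data from Lemma \ref{l:comp_wave} in order to collapse the six equalities of Proposition \ref{p:algebra} to a single scalar compatibility relation, and then to exhibit a range of parameters on which all four of the remaining inequalities can be satisfied.

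First I would make the ansatz $\alpha = -1/\rho_+$ and $\delta = -\beta/\rho_+$. A short computation shows that, with this choice and the continuity equations \eqref{eq:cont_left}--\eqref{eq:cont_right}, the two momentum-1 identities \eqref{eq:mom_1_left} and \eqref{eq:mom_1_right} are automatic. The continuity equations then give the explicit formulas
\[
\nu_- = \frac{\rho_- v_{-2} - \rho_1 \beta}{\rho_- - \rho_1}, \qquad \nu_+ = \frac{\rho_1 \beta}{\rho_1 - \rho_+}.
\]
Substituting these into \eqref{eq:mom_2_left} and \eqref{eq:mom_2_right} (using $v_{+2} = 0$ and $p(\rho) = \rho^2$), one sees that both equations are affine in $(\gamma, C_1)$ and depend only on the combination $\gamma - C_1/2$. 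Hence one of them determines $\gamma - C_1/2 = K/\rho_1$ as an explicit function of $(\rho_-, \rho_+, \rho_1, \beta)$, while the other yields a single scalar compatibility equation among those four parameters. Solving this relation (at worst quadratic in $\beta$) produces a two-parameter family of tuples satisfying all six Rankine--Hugoniot identities, with $C_1 > 0$ still free.

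Next I would analyze the four inequalities as conditions on $C_1$ with the other parameters frozen. Using $\varepsilon(\rho) = \rho$ (which follows from $p(r) = r^2 \varepsilon'(r)$), the trace condition \eqref{eq:sub_trace} becomes the lower bound $C_1 > \alpha^2 + \beta^2$. The determinant condition \eqref{eq:sub_det}, after substituting $\gamma = C_1/2 + K/\rho_1$, simplifies to a \emph{linear} inequality in $C_1$ with coefficient $-(\beta^2 + K/\rho_1)$, because the $C_1^2$ terms from $(C_1/2 - \alpha^2 + \gamma)(C_1/2 - \beta^2 - \gamma)$ and from $-\gamma^2$ cancel exactly. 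The admissibility inequalities \eqref{eq:E_left}, \eqref{eq:E_right} are also linear in $C_1$, with coefficients $\rho_1(\beta - \nu_-)$ and $\rho_1(\nu_+ - \beta)$ respectively. In the natural regime $\rho_- < \rho_1 < \rho_+$ and $\nu_- < \beta < \nu_+$ both of these coefficients are positive, so admissibility imposes two upper bounds on $C_1$.

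Finally, to close the argument I would exhibit explicit parameters for which the lower bound from \eqref{eq:sub_trace} is strictly smaller than each of the three upper bounds. A convenient choice is to fix $\rho_+$, set $\rho_- = \rho_+(1 - \epsilon)$ for small $\epsilon > 0$, take $\rho_1 = \rho_+(1 - \epsilon/2)$, and determine $\beta$ from the compatibility relation (a Taylor expansion shows the relevant root exists and is of order $\sqrt{\epsilon}$). Expanding all quantities to leading order in $\epsilon$, one then checks that each of the three upper bounds on $C_1$ exceeds $\alpha^2 + \beta^2$ by a positive quantity, leaving a nonempty open interval of admissible $C_1$. The main obstacle is precisely this balancing act: the subsolution condition pushes $C_1$ up while admissibility pushes it down, and one must verify that the compatibility-constrained parameters leave a nonempty overlap. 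Once such a tuple is constructed, the conclusion of the lemma follows directly from Proposition \ref{p:algebra}.
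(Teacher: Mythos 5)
Your structural observations about the algebraic system are correct and in fact illuminate what the paper does implicitly: with the ansatz $\alpha = v_{+1} = -1/\rho_+$ and $\delta = \beta v_{+1}$, both momentum--$1$ identities \eqref{eq:mom_1_left}--\eqref{eq:mom_1_right} are consequences of the continuity identities (the paper's eventual $\alpha=-1/4$, $\delta=0$ is exactly this with $\rho_+=4$, $\beta=0$). Your observation that the momentum--$2$ equations depend on $\gamma$ and $C_1$ only through $\gamma - C_1/2$, leaving one equation to fix that combination and one compatibility relation, is also correct and matches the paper's structure. And the general strategy --- reduce to an open $C_1$-interval --- is the paper's too.

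However, the closing step has a genuine gap. With $v_{+2}=0$ and $\rho_1 < \rho_+$, continuity forces $\nu_+ = \rho_1\beta/(\rho_1 - \rho_+)$, which has the \emph{opposite} sign of $\beta$. If $\beta>0$ then $\nu_+<0<\beta$, so the coefficient $\rho_1(\nu_+-\beta)$ in \eqref{eq:E_right} is negative and that condition gives a \emph{lower}, not upper, bound on $C_1$; your ``natural regime'' $\nu_-<\beta<\nu_+$ is then inaccessible. Moreover, in your perturbative scaling $\rho_-=\rho_+(1-\eps)$, $\rho_1=\rho_+(1-\theta\eps)$, an explicit leading-order computation of the compatibility relation shows it reduces to
\[
\frac{(\sqrt{2\rho_+}-b)^2}{1-\theta} + \frac{b^2}{\theta} = 2\rho_+, \qquad \beta\approx b\,\eps,
\]
which has a \emph{double} root $b=\theta\sqrt{2\rho_+}$ (the left-hand side attains its minimum value $2\rho_+$ there). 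So $\beta=O(\eps)$, not $O(\sqrt{\eps})$, and the solvability is marginal. Worse, one finds $\nu_-\approx \nu_+\approx -\sqrt{2\rho_+}$ to leading order, i.e.\ the fan collapses in the limit $\eps\to 0$, so the subsequent sign checks and $C_1$-interval analysis cannot be carried out by a naive expansion to leading order. The degeneracy is structural, not an artifact of the choice $\theta=1/2$.

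The paper sidesteps these issues in two ways you do not: it takes $\rho_-,\rho_+$ far apart (concretely $\rho_-=1$, $\rho_+=4$), so there is no degenerate limit, and it sits exactly on the boundary of your regime by setting $\nu_+=\beta=\delta=0$, which makes \eqref{eq:E_right} hold trivially (the $C_1$-coefficient vanishes) and collapses the system to a handful of explicitly checkable scalar relations. To fix your argument you would either need to abandon the perturbative regime in favor of explicit non-infinitesimal numerics as in the paper, or work to second order around the double root and carefully track the sign of $\nu_+-\beta$; as written, the claim that three independent upper bounds on $C_1$ are available, each exceeding $\alpha^2+\beta^2$, is not established.
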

\begin{proof} Taking into account that $p(\rho) = \rho^2$ and therefore
$\varepsilon(\rho) = \rho$, we substitute the identities of Lemma \ref{l:comp_wave} into the
unknowns of Proposition \ref{p:algebra} and reduce \eqref{eq:cont_right}-\eqref{eq:mom_2_right} to
\begin{align}
&\nu_+(\rho_1-\rho_+) = \rho_1\beta \label{eq:byebye1}\\
&\nu_+(\rho_1\alpha+1) = \rho_1\delta\label{eq:byebye2} \\
&\nu_+\rho_1\beta = -\rho_1\gamma + \rho_1^2 - \rho_+^2 + \rho_1\frac{C_1}{2}\, .
\end{align}
Similarly, we reduce \eqref{eq:cont_left}-\eqref{eq:mom_2_left} to
\begin{align}
&\nu_-(\rho_- - \rho_1) = 2\sqrt{2}\rho_-(\sqrt{\rho_+}-\sqrt{\rho_-}) - \rho_1\beta\\
&\nu_-(-\frac{\rho_-}{\rho_+} - \rho_1\alpha) = -2\sqrt{2}\frac{\rho_-}{\rho_+}(\sqrt{\rho_+}-\sqrt{\rho_-})
- \rho_1\delta \\
&\nu_-(2\sqrt{2}\rho_-(\sqrt{\rho_+}-\sqrt{\rho_-}) - \rho_1\beta) = 8\rho_-(\sqrt{\rho_+}-\sqrt{\rho_-})^2 
+ \rho_1\gamma + \rho_-^2- \rho_1^2 - \rho_1\frac{C_1}{2}\, .
\end{align}
The identities of Lemma \ref{l:comp_wave} do not influence the form of \eqref{eq:sub_trace}-\eqref{eq:sub_det}.
Instead, plugging them into \eqref{eq:E_left}-\eqref{eq:E_right} the latter are reduced to
\begin{align}
&\nu_- \left( \rho_-^2 - \rho_1^2 + \frac{\rho_-}{2 \rho_+^2} + 4 \rho_- (\sqrt{\rho_+}-\sqrt{\rho_-})^2
- \frac{C_1\rho_1}{2}\right)\nonumber\\
\leq\; & 
\sqrt{2} \rho_- (\sqrt{\rho_+}-\sqrt{\rho_-}) \left(4 \rho_- + \frac{1}{\rho_+^2}
+ 8 \rho_- (\sqrt{\rho_+}-\sqrt{\rho_-})^2\right) - 2 \rho_1^2 \beta - \frac{\beta C_1 \rho_1}{2}\\
&\nu_+ \left( \rho_1^2 - \rho_+^2 + \frac{C_1\rho_1}{2} - \frac{1}{2\rho_+}\right) \leq 2 \rho_1^2 \beta
+ \frac{C_1 \rho_1\beta}{2}\label{eq:byebye3}\, .
\end{align}
We next make the choice $\nu_+=\beta = \delta = 0$ and hence \eqref{eq:byebye1}, \eqref{eq:byebye2}
and \eqref{eq:byebye3}
are automatically satisfied. The remaining constraints above then become
\begin{align}
0 = &-\rho_1\gamma + \rho_1^2 - \rho_+^2 + \rho_1\frac{C_1}{2}\label{eq:surv1}\\
\nu_-(\rho_- - \rho_1) = &2\sqrt{2}\rho_-(\sqrt{\rho_+}-\sqrt{\rho_-})\label{eq:surv2}\\
\nu_-(-\frac{\rho_-}{\rho_+} - \rho_1\alpha) = &-2\sqrt{2}\frac{\rho_-}{\rho_+}(\sqrt{\rho_+}-\sqrt{\rho_-})
\label{eq:surv3}\\
\nu_-(2\sqrt{2}\rho_-(\sqrt{\rho_+}-\sqrt{\rho_-})) = &8\rho_-(\sqrt{\rho_+}-\sqrt{\rho_-})^2 
+ \rho_1\gamma + \rho_-^2- \rho_1^2 - \rho_1\frac{C_1}{2}\label{eq:surv4}
\end{align}
and
\begin{align}
&\nu_- \left( \rho_-^2 - \rho_1^2 + \frac{\rho_-}{2 \rho_+^2} + 4 \rho_- (\sqrt{\rho_+}-\sqrt{\rho_-})^2
- \frac{C_1\rho_1}{2}\right)\nonumber\\
\leq\; & 
\sqrt{2} \rho_- (\sqrt{\rho_+}-\sqrt{\rho_-}) \left(4 \rho_- + \frac{1}{\rho_+^2}
+ 8 \rho_- (\sqrt{\rho_+}-\sqrt{\rho_-})^2\right)\, .\label{eq:surv5}
\end{align}
Moreover, \eqref{eq:sub_trace} and \eqref{eq:sub_det} become
\begin{align}
\alpha^2\; &< C_1\label{eq:surv6}\\
 0\; &< \left(\frac{C_1}{2} - \alpha^2 + \gamma \right) \left(\frac{C_1}{2} - \gamma\right)\label{eq:surv7}\, .
\end{align}
Summarizing we are looking for real numbers $\nu_-<0, 0<\rho_-<\rho_+, \rho_1, \alpha, \gamma$ and $C_1$
satisfying the set of identities and inequalities \eqref{eq:surv1}-\eqref{eq:surv7}.

We next choose $ \rho_-=1 < 4=\rho_+$ and simplify further \eqref{eq:surv1}-\eqref{eq:surv5} as
\begin{align}
&\frac{C_1\rho_1}{2} + \rho_1^2 - \rho_1 \gamma - 16 = 0 \label{eq:surv2_1}\\
&\nu_- (1-\rho_1) = 2 \sqrt{2} \label{eq:surv2_2}\\
&\nu_- \left(\frac{1}{4} + \alpha \rho_1\right) = \frac{\sqrt{2}}{2}\label{eq:byebye11}\\
& 9 + \rho_1 \gamma - \rho_1^2 - \frac{C_1 \rho_1}{2} = 2 \sqrt{2} \nu_-\label{eq:byebye12}\\
& \nu_- \left( 5 + \frac{1}{32} - \rho_1^2 - \frac{C_1\rho_1}{2}\right) \leq \sqrt{2} \left(12 + \frac{1}{16}\right)
\, .\label{eq:surv2_5}
\end{align}
We now observe that \eqref{eq:surv2_2} and \eqref{eq:byebye11} imply $\alpha = -\frac{1}{4}$ and
\eqref{eq:surv2_1}-\eqref{eq:byebye12} imply $\nu_- = - \frac{7}{2\sqrt{2}}$. Therefore, our constraints
further simplify to looking for $\rho_1, \gamma, C_1$ such that
\begin{align}
\frac{1}{16} &< C_1 \label{eq:byebye22}\\
 0\; &< \left(\frac{C_1}{2} - \frac{1}{16} + \gamma \right) \left(\frac{C_1}{2} - \gamma\right)\label{eq:surv3_2}\\
0 &= \frac{C_1\rho_1}{2} + \rho_1^2 - \rho_1 \gamma - 16 \label{eq:surv3_3}\\
8 &= - 7 (1-\rho_1)\label{eq:byebye21}\\
48 + \frac{1}{4} &\geq -7 \left(5 + \frac{1}{32} -\rho_1^2 - \frac{C_1 \rho_1}{2}\right)\, .\label{eq:surv3_5}
\end{align}
From \eqref{eq:byebye21} we derive $\rho_1 = \frac{15}{7}$ and inserting this into \eqref{eq:surv3_3} we
infer $\frac{C_1}{2} - \gamma = \frac{559}{105}$. In turn this last identity reduces \eqref{eq:surv3_2}
to the inequality 
\begin{equation}\label{eq:surv4_1}
C_1 > \frac{1}{16} + \frac{559}{105}\, . 
\end{equation}
The remaining constraints \eqref{eq:surv3_3} and \eqref{eq:surv3_5} simplify to:
\begin{align}
\frac{C_1}{2} - \gamma &= \frac{559}{105}\label{eq:byebye31}\\
48 + \frac{1}{4} + 35 + \frac{7}{32} - \frac{225}{7} &\geq \frac{15 C_1}{2}\, .\label{eq:surv4_2}
\end{align}
We therefore see that $\gamma$ can be obtained from $C_1$ through \eqref{eq:byebye31}. Hence
the existence of the desired solution is equivalent to the existence of a $C_1$
satisfying \eqref{eq:surv4_1} and \eqref{eq:surv4_2}. Such $C_1$ exists if and only if
\[
\frac{15}{2} \left(\frac{1}{16} + \frac{559}{105}\right) < 48 + \frac{1}{4} + 35 + 
\frac{7}{32} - \frac{225}{7}\, ,
\]
which can be trivially checked to hold.
\end{proof}

Theorem \ref{t:main} and Corollary \ref{c:compression} now easily follow.

\begin{proof}[Proofs of Theorem \ref{t:main} and Corollary \ref{c:compression}]
Let $p (\rho) = \rho^2$ and consider the $\rho_\pm, v_\pm$ given by Lemma \ref{l:exp_sol}.
Applying Propositions \ref{p:algebra} and \ref{p:subs} we know that there are infinitely many
admissible solutions of \eqref{eq:Euler system}-\eqref{eq:R_data} as claimed in the Theorem.

Let now $(\rho_f, v_f)$ be any such solution and let $(\rho_b, v_b)$ be the locally
Lipschitz solutions of \eqref{eq:Euler system} given by Lemma \ref{l:comp_wave}. It is
straightforward to check that, if we define
\begin{equation}
(\rho, v) (x,t) := \left\{
\begin{array}{ll}
(\rho_f , v_f) (x,t) \qquad &\mbox{if $t\geq 0$}\\ \\
(\rho_b, v_b) (x,t) \qquad &\mbox{if $t\leq 0$}\, ,
\end{array}
\right.
\end{equation}
then the pair $(\rho, v)$ is a bounded admissible solution of \eqref{eq:Euler system} on the
entire space-time $\R^2\times \R$ with density bounded away from $0$. Moreover
$(\rho (\cdot, t), v (\cdot ,t))$ is a bounded Lipschitz function for every $t<0$.
In particular we can define $(\tilde{\rho}, \tilde{v}) (x,t) = (\rho, v) (x, t-1)$ and observe that,
no matter which of the infinitely many solutions $(\rho_f, v_f)$ given by Theorem \ref{t:main} we choose,
the corresponding $(\tilde{\rho}, \tilde{v})$ defined above is an admissible solution as in
Corollary \ref{c:compression} for the bounded and Lipschitz initial data 
$(\rho^0, v^0) = (\rho_b, v_b) (\cdot, -1)$.
\end{proof}

\begin{remark}\label{r:general pressure}
In fact, it is not difficult to see by a simple continuity argument that the conclusions of Theorem \ref{t:main} and Corollary \ref{c:compression} hold even for general pressure laws $p(\rho) = \rho^\gamma$ with $\gamma$ in some neighborhood of $2$.
\end{remark}

\section{Second method: further Riemann data for different pressures}\label{s:p_strana}

In this section we describe a second method for producing solutions to the algebraic set of
equations and inequalities of Proposition \ref{p:algebra}. Unlike the method given in the 
previous section, we do not know whether this one produces Riemann data generated by a compression
wave: we can only show that this is not the case for the ones which we have computed explicitely.  
Moreover we do not fix the pressure law but we exploit it as an extra degree of freedom.
On the other hand the reader can easily check that the method below gives a rather large set
of solutions (i.e. open) compared to the one of Lemma \ref{l:exp_sol} (where we do not know
whether one can perturb the choice $\nu_+=0$).

\begin{lemma}\label{l:p_strana}
Set $v_\pm = (\pm 1, 0)$. Then there exist $\nu_\pm$, $\rho_\pm$, $\rho_1$, $\alpha, \beta, \gamma, \delta$,
$C_1$ and a smooth pressure $p$ with $p' >0$ for which the algebraic identities and inequalities
\eqref{eq:cont_left}-\eqref{eq:E_right} are satisfied.  
\end{lemma}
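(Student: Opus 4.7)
The plan is to exploit the pressure law $p$ as an extra degree of freedom: after solving the six Rankine-Hugoniot identities \eqref{eq:cont_left}-\eqref{eq:mom_2_right} for most of the unknowns, the two admissibility inequalities involve the internal energy $\varepsilon$ in a way that can be tuned by taking $p$ close to a suitably smoothed step function.

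First, since $v_{\pm 2}=0$, I would combine the two continuity identities \eqref{eq:cont_left} and \eqref{eq:cont_right} with the sum of the two momentum-$x_2$ identities \eqref{eq:mom_2_left} and \eqref{eq:mom_2_right} to obtain
\[
 \nu_-(\rho_1-\rho_-) = \nu_+(\rho_1-\rho_+), \qquad (\nu_+-\nu_-)\rho_1\beta = p(\rho_-)-p(\rho_+).
\]
Choosing $\rho_-<\rho_1<\rho_+$ and $p(\rho_-)<p(\rho_+)$ forces $\nu_-<0<\nu_+$ and determines $\nu_\pm$ and $\beta$ explicitly in terms of $\rho_\pm,\rho_1,p(\rho_\pm)$. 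The momentum-$x_1$ identities \eqref{eq:mom_1_left} and \eqref{eq:mom_1_right} then fix $\alpha$ and $\delta$, and either of the two momentum-$x_2$ identities gives $\gamma-C_1/2$ as an affine function of $p(\rho_1)$. Thus $C_1$ and the value $p(\rho_1)\in(p(\rho_-),p(\rho_+))$ remain free, together with the freedom in the shape of $p$ between the three density values.

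Next I would interpret the remaining four inequalities \eqref{eq:sub_trace}-\eqref{eq:E_right} as bounds on $C_1$. The trace condition is automatic for $C_1$ large and the determinant condition \eqref{eq:sub_det}, after substituting the explicit expressions for $\alpha,\beta,\delta,\gamma$, becomes an explicit lower bound $C_1>L$ depending only on the fixed data. Eliminating $\beta$ from \eqref{eq:E_left} and \eqref{eq:E_right} via continuity and dividing by the (nonzero) mass flux $\nu_\pm(\rho_1-\rho_\mp)$, whose sign is fixed by the wedge configuration, rewrites the admissibility inequalities as the upper bounds
\[
 C_1 \leq 1 + 2\bigl(\varepsilon(\rho_\pm)-\varepsilon(\rho_1)\bigr) + 2p(\rho_1)\Bigl(\tfrac{1}{\rho_\pm}-\tfrac{1}{\rho_1}\Bigr).
\]
Since $\varepsilon'=p/\rho^2$, these bounds depend on the full shape of $p$; monotonicity of $p$ alone gives the extremal values $p(\rho_-)(1/\rho_--1/\rho_1) \leq \varepsilon(\rho_1)-\varepsilon(\rho_-)\leq p(\rho_1)(1/\rho_--1/\rho_1)$, and similarly on $[\rho_1,\rho_+]$. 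By choosing $p$ to be essentially a step function jumping sharply near $\rho=\rho_1$, these extremal values are approached, so the admissibility upper bounds tend to
\[
 1+2(p(\rho_1)-p(\rho_-))(\tfrac{1}{\rho_-}-\tfrac{1}{\rho_1}) \quad\text{and}\quad 1+2(p(\rho_+)-p(\rho_1))(\tfrac{1}{\rho_1}-\tfrac{1}{\rho_+}),
\]
which can be made arbitrarily large by choosing the pressure jumps and the reciprocal-density differences large.

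Finally I would close the argument with explicit numbers. For instance, taking $\rho_-=1/10$, $\rho_1=1/2$, $\rho_+=1$, $p(\rho_1)-p(\rho_-)=1$, $p(\rho_+)-p(\rho_-)=53/5$, the asymptotic admissibility upper bounds exceed $17$, while direct substitution shows that the lower bound $L$ from the subsolution determinant is strictly less than $17$. Choosing $C_1$ in the resulting gap and $p$ a sufficiently sharp smoothed step function makes all four inequalities strict, and since all the conditions are open, the set of valid parameters is automatically open. The main obstacle is the tension between the subsolution determinant (which forces $C_1$ large) and the two admissibility inequalities (which bound $C_1$ above); it is resolved by choosing $\rho_-$ much smaller than $\rho_1$, so that the factor $1/\rho_--1/\rho_1$ in the left admissibility upper bound is large enough to open a window of valid $C_1$ between the two constraints.
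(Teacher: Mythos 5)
Your proposal follows essentially the same two-step strategy as the paper: first exploit $p$ as a degree of freedom to reduce the admissibility inequalities to bounds involving only $p(\rho_\pm)$ and $p(\rho_1)$ (your monotonicity squeeze on $\varepsilon(\rho_1)-\varepsilon(\rho_\pm)$ is exactly the content of the paper's Lemma~\ref{l:choice_of_pressure}, where the extremizer is likewise $p'$ concentrated at $\rho_1$), then find a numerical solution of the residual algebraic system. I verified your concrete choice $\rho_-=1/10$, $\rho_1=1/2$, $\rho_+=1$, $p(\rho_1)-p(\rho_-)=1$, $p(\rho_+)-p(\rho_-)=53/5$: one finds $\alpha=7/9$, $\beta^2=424/45$, $\delta-\alpha\beta=4\beta/9$, $\gamma-C_1/2=-88/9$, the determinant lower bound $L=1265/81\approx 15.62$, and the binding admissibility upper bound equal to $17$ (not strictly ``exceeding'' $17$ as you wrote, but this does not affect the argument since $L<17$), so the gap is nonempty; the only difference from the paper is that Part~II there is carried out via an abstract change of variables $(\overline{\beta},\overline{\delta},\overline{C},\lambda,\ldots)$ with $\rho_1=1$ and $\overline{\beta}$ large, rather than an explicit numerical instance.
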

\subsection{Part I of the proof of Lemma \ref{l:p_strana}: reduction of the admissibility conditions}
We rewrite the conditions \eqref{eq:cont_left}-\eqref{eq:mom_2_right}
\begin{align}
\nu_- (\rho_1 - \rho_-) \, &=\, \rho_1  \beta  \label{eq:st1}\\
\nu_- (\rho_- + \rho_1 \alpha) \, &=  \rho_1 \delta \label{eq:st2} \\
\rho_1 \frac{C_1}{2} - \rho_1 \gamma + p (\rho_1) - p(\rho_-)\, &=\, \nu_- \rho_1 \beta\label{eq:st3}\\
\nu_+ (\rho_1-\rho_+ ) \, &=\,  \rho_1  \beta \label{eq:st4}\\
\nu_+ (\rho_1 \alpha- \rho_+) \, &= \, \rho_1 \delta \label{eq:st5}\\
\rho_1 \frac{C_1}{2} - \rho_1 \gamma + p (\rho_1) - p(\rho_+)\, &=\, \nu_+ \rho_1 \beta\label{eq:st6}\, .
\end{align}
The conditions \eqref{eq:sub_trace} and \eqref{eq:sub_det} are not affected by our choice.
The conditions \eqref{eq:E_left} and \eqref{eq:E_right} become
\begin{align}
\nu_- \left(\rho_- \eps (\rho_-) - \rho_1 \eps (\rho_1) 
+ \frac{\rho_-}{2} - \rho_1 \frac{C_1}{2}\right) + \beta \left( \rho_1 \eps (\rho_1) + p (\rho_1)
+ \rho_1\frac{C_1}{2}\right)\, &\leq 0
\label{eq:st7}\\
\nu_+\left(\rho_1 \eps (\rho_1) - \rho_+ \eps (\rho_+) + \rho_1 \frac{C_1}{2}- \frac{\rho_+}{2}\right)
- \beta \left( \rho_1 \eps (\rho_1) + p (\rho_1) + \rho_1 \frac{C_1}{2}\right)\, &\leq 0 \label{eq:st8}\, .
\end{align}
Plugging \eqref{eq:st1} and \eqref{eq:st4} into, respectively, \eqref{eq:st7} and \eqref{eq:st8}
we obtain
\begin{align}
& \nu_- \left(\rho_- \eps (\rho_-) - \rho_1 \eps (\rho_1) - \rho_1 \frac{C_1-1}{2}\right) + \beta 
\left(\rho_1 \eps (\rho_1) + p (\rho_1) + \rho_1 \frac{C_1-1}{2}\right) \leq 0\label{eq:byebye41}\\
&\nu_+\left(\rho_1 \eps (\rho_1) - \rho_+ \eps (\rho_+) + \rho_1 \frac{C_1-1}{2}\right)
-\beta \left( \rho_1 \eps (\rho_1) + p (\rho_1) + \rho_1 \frac{C_1-1}{2}\right)\leq 0\, .\label{eq:byebye42}
\end{align}
We next rely on the following

\begin{lemma}\label{l:choice_of_pressure}
Assume that
\begin{align} 
& \nu_-<0<\nu_+\, ,\label{eq:nu1<nu2'}\\
& \rho_-<\rho_+\, .\label{eq:rho<rho}
\end{align}
Then, there exist pressure functions $p\in C^\infty([0,+\infty[)$ with $p'>0$ on $]0,+\infty[$ such that the 
admissibility conditions \eqref{eq:byebye41}-\eqref{eq:byebye42} for a subsolution are implied by 
the following system of inequalities:
\begin{align}
 &\left(p(\rho_+) -p(\rho_1)\right)\left(\rho_+ - \rho_1 \right) 
> \frac{C_1-1}{2} \rho_+ \rho_1 \label{eq:surv5_1} \\
&\left(p(\rho_1) -p(\rho_-)\right)\left(\rho_1 -\rho_- \right) 
> \frac{C_1-1}{2} \rho_- \rho_1. \label{eq:surv5_2}
\end{align}
\end{lemma}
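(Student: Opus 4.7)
The plan is to eliminate $\beta$ from \eqref{eq:byebye41}--\eqref{eq:byebye42} via the Rankine--Hugoniot identities \eqref{eq:st1} and \eqref{eq:st4}, which give
\[
\beta \;=\; \nu_-\,\frac{\rho_1 - \rho_-}{\rho_1} \;=\; \nu_+\,\frac{\rho_1 - \rho_+}{\rho_1}.
\]
Under \eqref{eq:nu1<nu2'}--\eqref{eq:rho<rho} this forces $\rho_- < \rho_1 < \rho_+$ and $\beta < 0$. Setting $D := (C_1-1)/2$, substituting $\beta$ into \eqref{eq:byebye41} and dividing by $\nu_- < 0$ (reversing the inequality), one finds that all terms proportional to $\eps(\rho_1)$ and to $D$ collapse after expansion, leaving
\[
\rho_- \bigl[\eps(\rho_-) - \eps(\rho_1) - D\bigr] + \frac{(\rho_1-\rho_-)\,p(\rho_1)}{\rho_1} \;\geq\; 0.
\]
Dividing by $\rho_-$ and using $\eps'(r) = p(r)/r^2$ together with $\int_a^b s^{-2}\,ds = (b-a)/(ab)$, this rewrites as
\[
\int_{\rho_-}^{\rho_1}\frac{p(\rho_1) - p(s)}{s^2}\,ds \;\geq\; D.
\]
The same manipulation applied to \eqref{eq:byebye42} (now dividing by $\nu_+ > 0$) yields the symmetric reduction
\[
\int_{\rho_1}^{\rho_+}\frac{p(s) - p(\rho_1)}{s^2}\,ds \;\geq\; D.
\]
Both integrands are automatically nonnegative as soon as $p' > 0$.

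Using once more the identity $(\rho_1-\rho_-)/(\rho_-\rho_1) = \int_{\rho_-}^{\rho_1} s^{-2}\,ds$, the hypotheses \eqref{eq:surv5_1}--\eqref{eq:surv5_2} read
\[
(p(\rho_1) - p(\rho_-))\int_{\rho_-}^{\rho_1} s^{-2}\,ds \;>\; D, \qquad (p(\rho_+) - p(\rho_1))\int_{\rho_1}^{\rho_+} s^{-2}\,ds \;>\; D.
\]
These numbers are precisely the limiting values of the two reduced integrals when the pressure $p$ is concentrated near $\rho_1$: if $p$ is essentially constant equal to $p(\rho_-)$ on $[\rho_-, \rho_1-\eta]$, rises rapidly through $p(\rho_1)$ on $[\rho_1-\eta, \rho_1+\eta]$, and is essentially constant equal to $p(\rho_+)$ on $[\rho_1+\eta, \rho_+]$, then dominated convergence gives that the two reduced integrals tend to the two left-hand sides above as $\eta \downarrow 0$. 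The strict inequalities in the hypotheses therefore provide the slack needed to obtain the nonstrict reduced inequalities for all sufficiently small $\eta$.

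The construction is thus: freely fix $p_- < p_1 < p_+$ such that \eqref{eq:surv5_1}--\eqref{eq:surv5_2} hold; on $[\rho_-, \rho_+]$ build a piecewise linear strictly increasing profile of the shape above, matching the three prescribed values; mollify on a scale much smaller than $\eta$ to obtain a smooth strictly increasing function preserving the three values up to arbitrarily small error; finally extend smoothly and monotonically to all of $[0,+\infty[$. The step that requires the most care is the initial algebraic simplification, since the twin cancellations of $\eps(\rho_1)$ and of the $D$-terms are exactly what make the entire scheme succeed; everything afterwards is just continuity in $\eta$ together with an explicit mollification.
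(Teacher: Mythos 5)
Your proof is correct, and the overall strategy matches the paper's — eliminate $\beta$ via the two Rankine--Hugoniot identities, reduce the admissibility inequalities to integral inequalities depending only on the pressure profile, observe that concentrating the jump of $p$ near $\rho_1$ optimizes both integrals simultaneously, and then realize a smooth strictly increasing pressure by approximation. The details, however, differ in two respects worth noting. First, your algebraic reduction goes directly through $\eps$ and the kernel $s^{-2}$, yielding single integrals $\int_{\rho_-}^{\rho_1}\bigl(p(\rho_1)-p(s)\bigr)s^{-2}\,ds \geq D$ and $\int_{\rho_1}^{\rho_+}\bigl(p(s)-p(\rho_1)\bigr)s^{-2}\,ds \geq D$; the paper instead introduces $g(\rho)=\rho\eps(\rho)$, computes $g''(r)=p'(r)/r$, and arrives at equivalent iterated integrals $\int\int p'(r)/r$. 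Your form is slightly more transparent and makes the optimality of the step profile immediate. Second, the paper establishes existence by a variational argument: it relaxes the class of admissible $f=p'$ to Radon measures on $[\rho_-,\rho_1]$ and $[\rho_1,\rho_+]$ with prescribed total mass, expresses the two functionals as duality pairings against $(\rho_+-r)/r$ and $(r-\rho_-)/r$, and maximizes over extreme points (weighted Dirac masses), finding both maxima at $\delta_{\rho_1}$. You reach the same conclusion by direct construction and dominated convergence, which is more elementary and arguably more self-contained, though it gives slightly less conceptual insight into why $\rho_1$ is the optimal concentration point. The one thing to keep in mind — which you did handle via the final mollification remark — is that ``essentially constant'' plateaus must be traded for strictly increasing profiles while preserving the three prescribed pressure values up to an error absorbed by the strict inequalities in \eqref{eq:surv5_1}--\eqref{eq:surv5_2}.
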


\begin{proof} First, let us define $g(\rho):=\rho \varepsilon(\rho)$. In view of the relation 
$p(\rho)=\rho^2 \varepsilon'(\rho)$, we obtain
\begin{equation*} 
 g'(\rho)=\varepsilon(\rho) +\frac{p(\rho)}{\rho}.
\end{equation*}
Thus, by vitue of \eqref{eq:st1} and \eqref{eq:st4}, respectively, 
we can rewrite \eqref{eq:byebye41} and \eqref{eq:byebye42} as follows:
\begin{align} 
&\nu_- (g(\rho_-)-g(\rho_1))+\nu_- (\rho_1- \rho_-) g'(\rho_1)
-\nu_- \rho_- \frac{C_1-1}{2}\leq 0 \label{eq:inequality 2 new'}\\
&\nu_+ (g(\rho_1)-g(\rho_+))+\nu_+ (\rho_+ - \rho_1)g'(\rho_1)+\nu_+\rho_+ \frac{C_1-1}{2} \leq 0\, .
\label{eq:inequality 1 new'} 
\end{align}
From the hypothesis \eqref{eq:nu1<nu2'} we can further reduce \eqref{eq:inequality 2 new'}-\eqref{eq:inequality 1 new'} to
\begin{align} 
-&(g(\rho_1)-g(\rho_-))+ (\rho_1- \rho_-) g'(\rho_1)\geq \frac{C_1-1}{2} \rho_-  \label{eq:inequality 2 simplified'}\\
&(g(\rho_+)-g(\rho_1))- (\rho_+ - \rho_1)g'(\rho_1)\geq \frac{C_1-1}{2} \rho_+\, . \label{eq:inequality 1 simplified'}
\end{align}
Moreover, we observe from \eqref{eq:st1}-\eqref{eq:st4} that
$$\nu_+(\rho_+- \rho_1)=-\nu_-(\rho_1-\rho_-).$$
Hence, in view of \eqref{eq:nu1<nu2'}-\eqref{eq:rho<rho}, we must have
\begin{equation} \label{eq:case1'}
 \rho_-<\rho_1<\rho_+\, .
\end{equation}
Let us note that
$$\left(g(\sigma)-g(s)\right)-(\sigma-s) g'(s)= \int_s^\sigma \int_s^\tau g''(r)dr d\tau$$
for every $s<\sigma$. On the other hand, by simple algebra, we can compute $g''(r)= p'(r)/r$. Hence, the following equalities hold for every $s<\sigma$:
$$\left(g(\sigma)-g(s)\right)-(\sigma-s) g'(s)= \int_s^\sigma \int_s^\tau \frac{p'(r)}{r} dr d\tau$$
and
$$\left(g(s)-g(\sigma)\right)+(\sigma-s) g'(\sigma)= \int_s^\sigma \int_\tau^\sigma \frac{p'(r)}{r} dr d\tau.$$
As a consequence, and in view of \eqref{eq:case1'}, we can rewrite \eqref{eq:inequality 2 simplified'}
and \eqref{eq:inequality 1 simplified'} equivalently as
\begin{align}
&\int_{\rho_-}^{\rho_1} \int_{\tau}^{\rho_1} \frac{p'(r)}{r} dr d\tau \geq \frac{C_1-1}{2} \rho_-\, , \label{eq:integral ineq 2}\\
& \int_{\rho_1}^{\rho_+} \int_{\rho_1}^{\tau} \frac{p'(r)}{r} dr d\tau \geq \frac{C_1-1}{2} \rho_+\, . \label{eq:integral ineq 1}
\end{align}
Now, we introduce two new variables $q_-$ and $q_+$ defined by
$$q_-:= p(\rho_1)-p(\rho_-)\, ,$$
$$q_+:= p(\rho_+)-p(\rho_1)\, .$$
Proving Lemma \ref{l:choice_of_pressure} is then equivalent to showing  the existence of a pressure law $p$ satisfying 
$p(\rho_+)-p(\rho_1)=q_+$, 
$p(\rho_1)-p(\rho_-)=q_-$ and for which the inequalities \eqref{eq:integral ineq 2}-\eqref{eq:integral ineq 1} hold.

First, introducing $f:= p'$, we define the set of functions
$$\mathcal{L}:= \left\{ f\in C^\infty(]0, \infty[, ]0, \infty[)\, :\, 
\int_{\rho_-}^{\rho_1} f= q_- \;\;\mbox{and}\;\; \int_{\rho_1}^{\rho_+} f= q_+\right\}$$
and the two functionals defined on $\mathcal{L}$
$$L^+(f) := \int_{\rho_1}^{\rho_+} \int_{\rho_1}^\tau \frac{f(r)}{r} dr d\tau,$$
$$L^-(f) := \int_{\rho_-}^{\rho_1} \int_\tau^{\rho_1} \frac{f(r)}{r} dr d\tau.$$
Therefore, a sufficient condition for finding a pressure function $p$ with 
the above properties is that 
$$l^+:= \sup_{f\in \mathcal{L}} L^+(f) >\frac{C_1-1}{2} \rho_+$$
and 
$$l^-:= \sup_{f\in \mathcal{L}} L^-(f) >\frac{C_1-1}{2} \rho_- .$$
Let us generalize the space $\mathcal{L}$ as follows.
We introduce 
$$\mathcal{M}^+:= \left\{ \text{positive Radon measures } \mu \text{ on } [\rho_1, \rho_+]: \, \mu([\rho_1, \rho_+])=q_+ \right\},$$
$$\mathcal{M}^-:= \left\{ \text{positive Radon measures } \mu \text{ on } [\rho_-, \rho_1]: \, \mu([\rho_-, \rho_1])=q_- \right\}.$$
For consistency, we extend the functionals $L^+$ and $L^-$ defined on $\mathcal L$ to new functionals $L_+$ and $L_-$ 
respectively defined on $\mathcal M^+$ and on $\mathcal M^-$:
$$L_+(\mu) := \int_{\rho_1}^{\rho_+} \int_{\rho_1}^\tau \frac{1}{r} d\mu(r) d\tau \qquad\qquad \text{ for }\mu\in\mathcal M^+,$$
$$L_-(\mu) := \int_{\rho_-}^{\rho_1} \int_\tau^{\rho_1} \frac{1}{r} d\mu(r) d\tau \qquad\qquad \text{ for }\mu\in\mathcal M^-.$$
Upon setting
$$m^+:= \max_{\mu\in \mathcal M^+} L_+(\mu) $$
and 
$$m^-:= \max_{\mu\in \mathcal M^-} L_-(\mu),$$
it is clear that 
$$l^+\leq m^+ \quad\text{and}\quad l^-\leq m^-.$$
Moreover, let us note that the the maxima $m^pm$ are achieved due to the compactness of $\mathcal M^\pm$
with respect to the weak$^\star$ topology. 
By a simple Fubini type argument, we write 
$$L_+ (\mu)= \int_{\rho_1}^{\rho_+} \frac{\rho_+-r}{r} \; d\mu(r).$$
Hence, defining the function $h\in C([\rho_1, \rho_+])$ as $h(r):=(\rho_+-r)/r$ 
allows us to express the action of the linear functional $L_+$ as
a duality pairing; more precisely we have:
$$L_+(\mu)= <h,\mu> \quad\text{ for }\mu\in \mathcal M^+.$$
Analogously, if we define $g\in C([\rho_-, \rho_1])$ as $g(r):=(r-\rho_-)/r$, we can express $L_-$ as a duality pairing as well:
$$L_-(\mu)= <g,\mu> \quad\text{ for }\mu\in \mathcal M^-.$$
By standard functional analysis, we know that $m^\pm$ must be achieved at the extreme points of $\mathcal M^\pm$. 
The extreme points of $\mathcal M^\pm$ are the single-point measures, i.e. weighted Dirac masses.
For $\mathcal M^+$ the set of extreme points is then given by $E_+:= \left\{ q_+\delta_{\sigma} \text{ for } \sigma \in [\rho_1, \rho_+] \right\}$
while for $\mathcal M^-$ the set of extreme points is then given by $E_-:= \left\{ q_-\delta_{\sigma} \text{ for } \sigma \in [\rho_-, \rho_1] \right\}$
In order to find $m^\pm$, it is sufficient to find the maximum value of $L_\pm$ on $E_\pm$.
Clearly, we obtain
$$m^+= \max_{\sigma\in[\rho_1, \rho_+]} \left\{q_+ \, \frac{\rho_+-\sigma}{\sigma}\right\}=q_+ \, \frac{\rho_+-\rho_1}{\rho_1} $$
and
$$m^-= \max_{\sigma\in[\rho_-, \rho_1]} \left\{q_- \, \frac{\sigma-\rho_-}{\sigma}\right\}=q_- \, \frac{\rho_1-\rho_-}{\rho_1}. $$
Furthermore, given the explicit form of the maximum points, it is rather easy to show that
for every $\varepsilon>0$ there exists a function $f\in \mathcal L$ such that
$$L^+(f) > q_+ \, \frac{\rho_+-\rho_1}{\rho_1} -\varepsilon$$
and 
$$L_-(f) > q_- \, \frac{\rho_1-\rho_-}{\rho_1}-\varepsilon.$$
Such a function $f$ is the derivative of the desired pressure function $p$.
\end{proof}

\subsection{Part II of the proof of Lemma \ref{l:p_strana}} 
We now choose $\rho_1 =1$.
Applying Lemma \ref{l:choice_of_pressure} we set $q_\pm := \pm (p (\rho_\pm) - p (\rho_1))
= \pm (p (\rho_\pm) - p (1))$ and hence
reduce our problem to finding real numbers
$\rho_\pm, \nu_\pm, q_\pm, \alpha, \beta, \gamma, \delta, C_1$ satisfiyng
\begin{equation}\label{eq:surv6_1}
\nu_-<0 <\nu_+\, ,\quad 0 < \rho_-<1<\rho_+\, , \quad q_\pm >0
\end{equation}
\begin{align}
\nu_- (1 - \rho_-) \, &=\, \beta  \label{eq:surv6_2}\\
\nu_- (\rho_- + \alpha) \, &=  \delta \label{eq:surv6_3} \\
\frac{C_1}{2} - \gamma + q_-\, &=\, \nu_- \beta\label{eq:surv6_4}\\
\nu_+ (1-\rho_+ ) \, &=\, \beta \label{eq:surv6_5}\\
\nu_+ (\alpha- \rho_+) \, &= \, \delta \label{eq:surv6_6}\\
\frac{C_1}{2} - \gamma - q_+ &=\, \nu_+  \beta\label{eq:surv6_7}\, ,
\end{align}
\begin{align}
q_- (1-\rho_-)\, &> \frac{C_1-1}{2} \rho_-\label{eq:byebye61}\\
q_+ (\rho_+-1)\, &> \frac{C_1-1}{2} \rho_+\label{eq:byebye62}
\end{align}
and \eqref{eq:sub_trace}-\eqref{eq:sub_det}.

Next, using \eqref{eq:surv6_1}, \eqref{eq:surv6_2} and \eqref{eq:surv6_5} we rewrite
\eqref{eq:byebye61}-\eqref{eq:byebye62} as
\begin{align}
- \beta q_- \, & > \frac{C_1-1}{2} \left(-\nu_- \rho_-\right)\label{eq:byebye71}\\
- \beta q_+ \, & > \frac{C_1-1}{2} \nu_+\rho_+\, .\label{eq:byebye72}
\end{align}
In order to simplify our computations we then introduce the new variables
\begin{equation}\label{eq:new_var}
\overline{\beta} = - \beta\, ,\; \overline{\delta}= - \delta\, , \; \overline{C} = \frac{C_1}{2}\, ,\;
\nu^-= - \nu_-\, , \; r_+ = \rho_+\nu_+ \, \;\mbox{and}\; r_- = \rho_-\nu^- = - \rho_-\nu_-\, .
\end{equation}
Therefore, our conditions become
\begin{equation}\label{eq:surv7_1}
q_\pm, r_\pm, \nu_+, \nu^- > 0  
\end{equation}
\begin{align}
\nu^- - r_- \, &=\, \overline{\beta}  \label{eq:byebye82}\\
r_+ - \nu_+ \, &=\, \overline{\beta} \label{eq:byebye83}\\
r_- + \alpha \nu^- \, &=  \overline{\delta} \label{eq:byebye84} \\
r_+ - \alpha\nu_+ \, &= \, \overline{\delta} \label{eq:byebye85}\\
\overline{C} - \gamma + q_-\, &=\, \nu^- \overline{\beta}\label{eq:surv7_6}\\
\overline{C} - \gamma - q_+ &=\, - \nu_+  \overline{\beta}\label{eq:surv7_7}
\end{align}
\begin{align}
\overline{\beta} q_- &> \left(\overline{C}-\frac{1}{2}\right) r_-\label{eq:surv7_8}\\
\overline{\beta} q_+ &> \left(\overline{C}-\frac{1}{2}\right) r_+\label{eq:surv7_9}\, .
\end{align}
Moreover \eqref{eq:sub_trace}-\eqref{eq:sub_det} become
\begin{align}
&\alpha^2 + \overline{\beta}^2 < 2\overline{C} \label{eq:surv7_10}\\
&(\overline{C} - \alpha^2 +\gamma) ( \overline{C} - \overline{\beta}^2 - \gamma) - (\overline{\delta} -
\alpha \overline{\beta})^2 > 0 \label{eq:surv7_11}\, .
\end{align}
We assume $\alpha^2 \neq 1$ and solve for $\nu_-, \nu^+$ and $r_\pm$ in 
\eqref{eq:byebye82}-\eqref{eq:byebye85} 
to achieve
\begin{align}
&\nu^- = \frac{\overline{\delta} + \overline{\beta}}{1+\alpha}\, , \quad 
\nu_+ = \frac{\overline{\delta} - \overline{\beta}}{1-\alpha}\quad
\mbox{and} \quad r_\pm = \frac{\overline{\delta} - \alpha \overline{\beta}}{1\mp \alpha}\label{eq:solved}\, .
\end{align}
Observe that 
\[
 r_+r_- = \frac{(\overline{\delta}- \alpha \overline{\beta})^2}{1- \alpha^2}\, .
\]
Hence, if we assume $\alpha^2 < 1$ and $\overline{\delta} > \overline{\beta}>0$,
we see that the $\nu^+, \nu_-, r_\pm$ as defined in the formulas \eqref{eq:solved} satisfy
the inequalities in \eqref{eq:surv7_1}. Hence, inserting \eqref{eq:solved} we look
for solutions of the set of identities and inequalities
\begin{align}
& \alpha^2 <1\, , \; \overline{\delta} > \overline{\beta} > 0\, , \; q_\pm >0\label{eq:surv8_1}\\
& \overline{C} - \gamma + q_- = \frac{\overline{\delta}+\overline{\beta}}{1+\alpha} \overline{\beta}\label{eq:byebye8_2}\\
& \overline{C} - \gamma - q_+ = - \frac{\overline{\delta}-\overline{\beta}}{1-\alpha} \overline{\beta}\label{eq:byebye8_3}\\
& \overline{\beta} q_- > \left(\overline{C}-\frac{1}{2}\right)\frac{\overline{\delta} 
- \alpha\overline{\beta}}{1+\alpha}\label{eq:surv8_4}\\
& \overline{\beta} q_+ > \left(\overline{C}-\frac{1}{2}\right)\frac{\overline{\delta} 
- \alpha\overline{\beta}}{1-\alpha}\label{eq:surv8_5}
\end{align}
combined with \eqref{eq:surv7_10} and \eqref{eq:surv7_11}. Observe that, if we assume in
addition that $\bar{C} > \frac{1}{2}$, then $\alpha^2<1$, 
$\overline{\delta}>\overline{\beta}>0$ and \eqref{eq:surv8_4}-\eqref{eq:surv8_5} imply the positivity of
$q_\pm$. We can therefore solve the equations \eqref{eq:byebye8_2}-\eqref{eq:byebye8_3} for $q_\pm$ and
insert the corresponding values in the remaining inequalities \eqref{eq:surv8_4} and \eqref{eq:surv8_5}. Summarizing, we are looking
for $\alpha, \overline{\beta}, \gamma, \overline{\delta}, \overline{C}$ satisfying the following 
inequalities
\begin{align}
& \alpha^2 <1\, ,\; \overline{\delta}>\overline{\beta} > 0\, ,\; \overline{C} > \frac{1}{2}\label{eq:surv9_1}\\
& \overline{\beta} \left[ \overline{\beta} \frac{\overline{\delta}+\overline{\beta}}{1+\alpha} - \overline{C}
+ \gamma \right] > \left(\overline{C}-\frac{1}{2}\right) \frac{\overline{\delta}-
\alpha \overline{\beta}}{1+\alpha}\label{eq:surv9_2}\\
& \overline{\beta} \left[ \overline{\beta} \frac{\overline{\delta}-\overline{\beta}}{1-\alpha} + \overline{C}
- \gamma \right] > \left(\overline{C}-\frac{1}{2}\right) \frac{\overline{\delta}-
\alpha \overline{\beta}}{1-\alpha}\label{eq:surv9_3}\\
&\alpha^2 + \overline{\beta}^2 < 2\overline{C} \label{eq:surv9_4}\\
&(\overline{C} - \alpha^2 +\gamma) ( \overline{C} - \overline{\beta}^2 - \gamma) - (\overline{\delta} -
\alpha \overline{\beta})^2 > 0 \label{eq:surv9_5}\, .
\end{align}
We next introduce the variable $\lambda = \overline{\delta}- \alpha \overline{\beta}$ and rewrite our
inequalities as
\begin{align}
& \alpha^2 <1 \, , \; \lambda > (1-\alpha) \overline{\beta} > 0\, , \; \overline{C} > \frac{1}{2}\label{eq:surv10_1}\\
& \overline{\beta} (1+\alpha) (\overline{\beta}^2 - \overline{C} + \gamma) > \left(\overline{C} - \overline{\beta}^2
- \frac{1}{2}\right)\lambda\label{eq:surv10_2}\\
& \overline{\beta} (1-\alpha) (-\overline{\beta}^2 + \overline{C} - \gamma) > \left(\overline{C} - \overline{\beta}^2
- \frac{1}{2}\right)\lambda\label{eq:surv10_3}\\
& \alpha^2 + \overline{\beta}^2 < 2 \overline{C}\label{eq:surv10_4}\\
&(\overline{C} - \alpha^2 +\gamma) ( \overline{C} - \overline{\beta}^2 - \gamma) > \lambda^2 \label{eq:surv10_5}\, .
\end{align}
Observe that, if we require $\alpha, \overline{\beta}, \gamma$ and $\overline{C}$ to satisfy the following
inequalities
\begin{align}
& \alpha^2 <1\, , \; \overline{C}>\frac{1}{2}\label{eq:surv11_1}\\
&\overline{C} - \alpha^2 + \gamma > 0\label{eq:surv11_2}\\
&\overline{C} - \overline{\beta}^2 - \gamma > 0 \label{eq:surv11_3}\\
&\overline{\beta}^2 + \frac{1}{2} - \overline{C} > 0\label{eq:byebye_immediately}\\
&\sqrt{(\overline{C}-\alpha^2 + \gamma) (\overline{C} - \overline{\beta}^2 - \gamma)} > (1-\alpha) \overline{\beta}
> 0 \label{eq:surv11_4}\\
&\left(\overline{\beta}^2 + \frac{1}{2} - \overline{C}\right) \sqrt{\overline{C}-\alpha^2 + \gamma}
> \overline{\beta} (1+\alpha) \sqrt{\overline{C} - \overline{\beta}^2 - \gamma}\label{eq:surv11_5}
\end{align}
then setting 
\[
\lambda := \sqrt{(\overline{C}-\alpha^2 + \gamma)(\overline{C}-\overline{\beta}^2 - \gamma)} - \eta\, ,
\]
the inequalities \eqref{eq:surv10_1}-\eqref{eq:surv10_5} are satisfied whenever $\eta$ is a sufficiently small
positive number.

Observe next that \eqref{eq:byebye_immediately} is surely satisfied if the remaining inequalities are and
hence we can drop it. Moreover, if $\overline{\beta}, \gamma$ and $\overline{C}$ satisfy
\begin{align}
&\overline{\beta}>0\, , \;\overline{C} > \frac{1}{2}\label{eq:surv12_1}\\
&\overline{C} - \overline{\beta}^2 - \gamma > 0 \label{eq:surv12_2}\\
&\overline{C}-1+\gamma >0\label{eq:surv12_3}\\
&\left(\overline{\beta}^2 + \frac{1}{2} - \overline{C}\right) \sqrt{\overline{C} -1 + \gamma} > 2\overline{\beta}
\sqrt{\overline{C}-\overline{\beta}^2 - \gamma} \label{eq:surv12_4}  
\end{align}
then setting $\alpha = 1 - \vartheta$, the inequalities \eqref{eq:surv11_1}-\eqref{eq:surv11_5} hold provided $\vartheta>0$
is chosen small enough.

Finally, choosing $\overline{C} = \frac{4}{5} \overline{\beta}^2$, $\gamma= - \frac{2}{5} \overline{\beta}^2$
and imposing $\overline{\beta} > \sqrt{\frac{5}{2}}$ we see that \eqref{eq:surv12_1}, \eqref{eq:surv12_2}
and \eqref{eq:surv12_3} are automatically satisfied. Whereas \eqref{eq:surv12_4} is equivalent to
\[
\left(\frac{\overline{\beta}^2}{5} + \frac{1}{2}\right) \sqrt{\frac{2\overline{\beta}^2}{5} -1} 
> \frac{2\overline{\beta}^2}{\sqrt{5}}\, .
\]
However the latter inequality is surely satisfied for $\overline{\beta}$ large enough.

\section{Classical solutions of the Riemann problem}\label{s:riemann}

We show here that, if we restrict our attention to $BV$ selfsimilar solutions of 
\eqref{eq:Euler system}-\eqref{eq:R_data} which do not depend on the variable $x_1$, then for the initial
data of Theorem \ref{t:main} and Corollary \ref{c:compression} the solutions of the Cauchy problem
are unique. We mostly exploit classical results about the $1$-dimensional Riemann problem for hyperbolic system
of conservation laws. We however complement them with some recent results in the theory of transport equations:
the resulting argument is then shorter and moreover yields uniqueness under milder assumptions (see
Remark \ref{r:better} below).

\begin{proposition}\label{p:Riemann_classico}
Consider $p (\rho ) = \rho^2$ and any initial data of type \eqref{eq:R_data} as in Lemma \ref{l:comp_wave}
(which therefore include the data of the proof of Theorem \ref{t:main} and Corollary \ref{c:compression}).
Then there exists a unique admissible self-similar bounded $BV_{loc}$ solution (i.e. of the form
$(\rho, v) (x,t) = (r, w) (\frac{x_2}{t})$) of \eqref{eq:Euler system} with $\rho$ bounded away from $0$. 
\end{proposition}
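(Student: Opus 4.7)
The plan is to reduce the two-dimensional self-similar Cauchy problem to a one-dimensional system in $(t,x_2)$ and then to decouple the resulting three equations into a $2\times 2$ isentropic Euler problem for $(\rho,v_2)$ together with a linear transport equation for $v_1$. Any solution of the form $(\rho,v)(x,t)=(r,w)(x_2/t)$ is automatically independent of $x_1$, so the 2D system \eqref{eq:Euler system} becomes the three conservation laws
\begin{align*}
\partial_t\rho+\partial_{x_2}(\rho v_2) &= 0,\\
\partial_t(\rho v_1)+\partial_{x_2}(\rho v_1 v_2) &= 0,\\
\partial_t(\rho v_2)+\partial_{x_2}(\rho v_2^2+p(\rho)) &= 0,
\end{align*}
together with \eqref{eq:admissibility condition}. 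The first and third equations form a closed $2\times 2$ system in $(\rho,v_2)$ — the standard 1D isentropic Euler system — while the second equation, combined with continuity and the bound $\rho\ge c_0>0$, reduces to the linear transport equation $\partial_t v_1+v_2\,\partial_{x_2}v_1=0$.

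I would then apply classical Riemann theory to the $2\times 2$ subsystem, which for $p(\rho)=\rho^2$ is strictly hyperbolic with both characteristic families genuinely nonlinear on $\{\rho>0\}$. This yields a unique admissible self-similar $BV_{loc}$ solution $(\rho,v_2)$ with $\rho\ge c_0$. The relevant reduction from the $3\times 3$ energy inequality \eqref{eq:admissibility condition} to the standard $2\times 2$ energy inequality follows by subtracting the conservation law $\partial_t(\tfrac12\rho v_1^2)+\partial_{x_2}(\tfrac12\rho v_1^2 v_2)=0$, which in turn follows from the transport equation and continuity via the renormalization property of $BV$ transport solutions. With $(\rho,v_2)$ fixed, $v_1$ is recovered from the transport equation with constant initial trace $v_1^0\equiv-1/\rho_+$, and the unique bounded self-similar $BV$ solution should be $v_1\equiv-1/\rho_+$.

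The main obstacle — and the reason classical Riemann theory alone does not suffice — lies in this last step: at a sonic point $\xi_0$ where $v_2(\xi_0)=\xi_0$, a spurious ``$2$-contact'' jumping $v_1$ satisfies Rankine-Hugoniot trivially and the associated energy inequality with equality, so admissibility by itself does not exclude phantom jumps of $v_1$ internal to each region of the wave fan. Ruling these out is precisely where I would complement classical Riemann theory with the rigidity of linear transport equations with $BV$ coefficients, in the spirit of DiPerna-Lions and Ambrosio: the regular Lagrangian flow of the vector field $(1,v_2)$ is uniquely defined, and any bounded weak solution of the transport equation must coincide with the pushforward of its initial trace along this flow. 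Since the trace is constant, so is $v_1$, completing the uniqueness argument.
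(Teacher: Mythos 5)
Your proposal is correct and takes essentially the same approach as the paper: decompose the self-similar $1$D reduction into the $2\times 2$ isentropic system for $(\rho,v_2)$ plus the passive transport equation $\partial_t(\rho v_1)+\partial_{x_2}(\rho v_1 v_2)=0$, settle the former by the classical theory of the genuinely nonlinear Riemann problem, and settle the latter by the near-incompressibility/renormalization theory for transport equations with $BV$ coefficients (Ambrosio, DiPerna--Lions, \cite{dl1}), precisely to kill the spurious linearly degenerate $2$-contact you point out. The only deviation is in the ordering: the paper first concludes $v_1\equiv -1/\rho_+$ from the transport uniqueness theorem, after which the reduction of the energy inequality from the $3\times 3$ to the $2\times 2$ system is immediate (since $\tfrac12\rho v_1^2$ is then a constant multiple of $\rho$ and the relevant conservation law is just the continuity equation); you instead first solve the $2\times 2$ problem and obtain the needed identity $\partial_t(\tfrac12\rho v_1^2)+\partial_{x_2}(\tfrac12\rho v_1^2 v_2)=0$ by renormalizing $\beta(v_1)=v_1^2/2$, and only afterwards identify $v_1$. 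Both orderings rest on the same tools and both are valid; the paper's order is marginally more economical since it invokes renormalization only once.
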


\begin{remark}\label{r:better}
In fact our proof of Proposition \ref{p:Riemann_classico} has a stronger outcome. In particular
the same uniqueness conclusion holds under the following more general assumptions:
\begin{itemize}
\item $p$ satisfies the usual ``hyperbolicity assumption'' $p' >0$ and the
``genuinely nonlinearity condition'' $2 p' (r) + r p'' (r) >0$ $\forall r>0$;
\item $(\rho, v)$ is a bounded admissible solution with density bounded away from zero, whereas 
the $BV$ regularity and the self-similarity hypotheses are assumed {\em only} for $\rho$ and the second component
of the velocity $v$.
\end{itemize}
\end{remark}

\begin{remark}\label{r:Riemann_classico_2}
The arguments given below can be adapted to show the same uniqueness statement for the Cauchy problem
corresponding to the data generated by Lemma \ref{l:p_strana}. This would only require some lengthier ad hoc analysis of the classical Riemann problem for the system \eqref{eq:2x2}, with $\rho^2$ replaced by the pressures
of Lemma \ref{l:p_strana}. 
\end{remark}

\begin{proof} Observe that the initial data for the first component $v_1$ is the constant $-\frac{1}{\rho_+}$. 
On the other hand:
\begin{itemize}
 \item $\rho$ is a bounded function of locally bounded variation;
\item The vector field $\bar{v} = (0, v_2)$ is bounded, has locally bounded variation and solves the
continuity equation
\begin{equation}\label{eq:cont_100}
\partial_t \rho + \div_x (\rho \bar{v}) = 0\, ; 
\end{equation}
\item $v_1$ is an $L^\infty$ weak solution of the transport equation
\begin{equation}\label{eq:transport}
\left\{
\begin{array}{ll}
&\partial_t (\rho v_1) + \div (\rho \bar{v} v_1) = 0\\ \\
&v_1 (0, \cdot) = -\frac{1}{\rho_+}\, .
\end{array}
\right.
\end{equation}
\end{itemize}
Therefore, the vector field $\bar{v}$ is nearly incompressible in the sense of \cite[Definition 3.6]{dl1}. By the
$BV$ regularity of $\rho$ and $\bar{v}$ we can apply Ambrosio's renormalization theorem \cite[Theorem 4.1]{dl1}
and hence use \cite[Lemma 5.10]{dl1} to infer from \eqref{eq:cont_100} that the pair 
$(\rho, \bar{v})$ has the renormalization property of \cite[Definition 3.9]{dl1}. Thus we can apply 
\cite[Corollary 3.14]{dl1} to infer that there is a unique bounded weak solution of \eqref{eq:transport}.
Since the constant function is a solution, we therefore conclude that $v_1$ is identically equal to 
$-\frac{1}{\rho_+}$. 

Set now $m (x_2, t) := \rho (x_2, t) v_2 (x_2, t)$. The pair $\rho, m$ is then a self-similar $BV_{loc}$ 
weak solution of the $2\times 2$ one-dimensional system of conservation laws
\begin{equation}\label{eq:2x2}
\left\{
\begin{array}{ll}
\partial_t \rho + \partial_{x_2} m = 0 \\ \\
\partial_t m + \partial_{x_2} \left(\frac{m^2}{\rho} + \rho^2\right) = 0\, ,        
\end{array}
\right. 
\end{equation}
that is the standard system of isentropic Euler in Eulerian coordinates with a particular 
polytropic pressure. It is well-known that such system is genuinely nonlinear
in the sense of \cite[Definition 7.5.1]{da} and therefore, following the discusssion of \cite[Section 9.1]{da}
we conclude that the functions $(\rho, m)$ result from ``patching'' rarefaction waves and shocks 
connecting constant
states, i.e. they are classical solutions of the so-called Riemann problem in the sense of \cite[Section 9.3]{da}. 
It is well-known
that in the special case of \eqref{eq:2x2} the latter property and the admissibility condition
determine uniquely the functions $(\rho, m)$.
For instance, one can apply \cite[Theorem 3.2]{kk}.
\end{proof}


\begin{thebibliography}{plain}

\bibitem{br} BRESSAN, A.: \textit{Hyperbolic systems of conservation laws. 
The one-dimensional Cauchy problem.} 
Oxford Lecture Series in Mathematics and its Applications, 20. Oxford University Press, Oxford, 2000

\bibitem{bdls} BUCKMASTER, T., DE LELLIS, C., SZ\'EKELYHIDI, L.J.: Transporting microstructure and
dissipative Euler flows. Preprint (2013)

\bibitem{CF1} CHEN, G.Q.; FRID, H.: 
Uniqueness and asymptotic stability of Riemann solutions for the compressible Euler equations. 
\textit{Trans. Amer. Math. Soc.} \textbf{353}, no. 3, 1103--1117 (2001).

\bibitem{CF2} CHEN, G.Q.; FRID, H.: Extended divergence-measure fields and the Euler equations for gas dynamics. 
\textit{Comm. Math. Phys.} \textbf{236}, no. 2, 251--280 (2003). 

\bibitem{ch} CHIODAROLI, E.: A counterexample to well-posedeness of entropy solutions to the compressible Euler system. Preprint (2011)

\bibitem{cfg} CORDOBA, D., FARACO, D., GANCEDO, F.: Lack of uniqueness for weak solutions of the 
incompressible porous media equation.
\textit{Arch. Ration. Mech. Anal.} \textbf{200}, no. 3, 725--746 (2011) 

\bibitem{da} DAFERMOS, C.M.: \textit{Hyperbolic conservation laws in continuum physics}, vol. \textbf{325} 
of Grundleheren der Mathematischen Wissenschaften [Fundamental Principles of Mathematical Sciences]. Third edition.
Springer, Berlin, 2010

\bibitem{dan} DANERI, S.: Cauchy problem for dissipative H\"older solutions to the incompressible 
Euler equations. Preprint (2013)

\bibitem{dl1} DE LELLIS, C.: Notes on hyperbolic systems of conservation laws and transport equations. 
\textit{Handbook of differential equations: evolutionary equations.} Vol. III, 277--382 (2007)

\bibitem{dls1} DE LELLIS, C., SZ\'EKELYHIDI, L.J.: The Euler equations as a differential inclusion. 
\textit{Ann. Math.} \textbf{170}, no. 3, 1417--1436 (2009)

\bibitem{dls2} DE LELLIS, C., SZ\'EKELYHIDI, L.J.: On admissibility criteria for weak solutions 
of the Euler equations. \textit{Arch. Ration. Mech. Anal.} \textbf{195}, no. 1, 225--260 (2010)

\bibitem{dls3} DE LELLIS, C., SZ\'EKELYHIDI, L.J.: The h-principle and the equations of fluid dynamics. 
\textit{Bull. Amer. Math. Soc. (N.S.)} \textbf{49}, no. 3, 347--375 (2012) 

\bibitem{dls4} DE LELLIS, C., SZ\'EKELYHIDI, L.J.: Continuous dissipative Euler flows.
Preprint (2012). To appear in \textit{Inv. Math.}

\bibitem{dls5} DE LELLIS, C., SZ\'EKELYHIDI, L.J.: Dissipative Euler flows and Onsager's conjecture.
Preprint (2012)

\bibitem{DP} DI PERNA, R.:  Uniqueness of solutions to hyperbolic conservation laws. 
\textit{Indiana Univ. Math. J.} \textbf{28}, no. 1, 137--188 (1979).

\bibitem{isett} ISETT, P.:, H\"older continuous Euler flows in three dimensions with compact support 
in time. Preprint (2012)

\bibitem{kk} KEYFITZ, B.L., KRANZER, H.C.: Existence and uniqueness of entropy solutions to the 
Riemann problem for hyperbolic systems of two nonlinear conservation laws. 
\textit{J. Differential Equations} \textbf{27}, no. 3, 444--476 (1978)

\bibitem{se} SERRE, D.: \textit{Systems of conservation laws. 1.} Cambridge University Press, Cambridge, 1999. Hyperbolicity, entropies, shock waves. Translated from the 1996 French original by I.N.Sneddon

\bibitem{sc} SCHEFFER, V.: An inviscid flow with compact support in space-time. \textit{J. Geom. Anal.}, \textbf{3}, no. 4, 343--401 (1993)

\bibitem{sh} SHNIRELMAN, A.: On the nonuniqueness of weak solution of the Euler equation. \textit{Comm. Pure Appl. Math.}, \textbf{50}, no. 12, 1261--1286 (1997)

\bibitem{shvidkoy} SHVIDKOY, R.:
Convex integration for a class of active scalar equations. 
\textit{J. Amer. Math. Soc.} \textbf{24}, no. 4, 1159--1174 (2011)

\bibitem{sz} SZ\'EKELYHIDI, L.J.: Weak solutions to the incompressible Euler equations with 
vortex sheet initial data. \textit{C. R. Acad. Sci. Paris}, Ser.I \textbf{349}, no. 19-20, 1063--1066 (2011)

\bibitem{sz2} SZ\'EKELYHIDI, L.J.: Relaxation of the incompressible porous media equation.
Preprint (2011). To appear in \textit{Ann. l'ENS}.

\end{thebibliography}
\end{document}